\theoremstyle{plain}
\newtheorem{theorem}{Theorem}[section]
\newtheorem*{theorem*}{Theorem}
\newtheorem{proposition}{Proposition}[theorem]
\newtheorem{lemma}[theorem]{Lemma}
\newtheorem{corollary}[theorem]{Corollary}
\newtheorem*{claim}{Claim}
\theoremstyle{definition}
\newtheorem{defi}[theorem]{Definition}
\newtheorem*{definition*}{Definition}
\newtheorem{example}[theorem]{Example}
\numberwithin{equation}{section}
\newcommand\C{\mathbb C}
\newcommand\D{\mathbb D}
\newcommand\R{\mathbb R}
\newcommand\N{\mathbb N}
\newcommand\Q{\mathbb Q}
\newcommand\calS{\mathcal S}
\newcommand\calL{\mathcal L}
\newcommand\calF{\mathcal F}
\newcommand\Lmu{L_\mu(T)}
\newcommand\Lmuz{L_\mu^0(T)}
\renewcommand\o{\operatorname{o}}
\newcommand\dist{\operatorname{d}}
\renewcommand\phi{\varphi}
\renewcommand\epsilon{\varepsilon}
\newcommand*{\bigchi}{\mbox{\Large$\chi$}}
\newcommand\ben{\begin{eqnarray}}
\newcommand\eeqn{\end{eqnarray}}
\newcommand\ds{\displaystyle}
\keywords{Composition operators; Hypercyclicity; Weighted Banach spaces}
\subjclass[2010]{Primary: 47B33, 47A16; Secondary: 47B38}
\title[Hypercyclicity of $C_\varphi$ on $\Lmuz$]{Hypercyclicity of
	composition operators on\\ discrete weighted Banach spaces}
\author{Robert F.~Allen\textsuperscript{1}, Flavia Colonna\textsuperscript{2}, Rub\'en A.~Mart\'inez-Avenda\~no\textsuperscript{3}, and Matthew A.~Pons\textsuperscript{4}}
\address{\textsuperscript{1}Department of Mathematics and Statistics, University of Wisconsin-La Crosse, USA}
\address{\textsuperscript{2}Department of Mathematical Sciences, George Mason University, USA}
\address{\textsuperscript{3}Departamento Acad\'emico de Matem\'aticas, Instituto Tecnol\'ogico Aut\'onomo de M\'exico, Mexico}
\address{\textsuperscript{4}Department of Mathematics, North Central College, USA} 
\email{rallen@@uwlax.edu, fcolonna@gmu.edu, rubeno71@gmail.com, mapons@noctrl.edu}
\begin{document}

\begin{abstract}
	In this paper, we study the hypercyclic composition operators
        on weighted Banach spaces of functions defined on discrete
        metric spaces.  We show that the only such composition
        operators act on the ``little'' spaces. We characterize the
        bounded composition operators on the little spaces, as well as
        provide various necessary conditions for hypercyclicity.
\end{abstract}

\maketitle

\section{Introduction}
Let $X$ be a Banach space of functions on a domain $\Omega$. A
self-map $\varphi$ of $\Omega$ induces the composition operator
defined by
$$
C_\varphi f = f\circ\varphi
$$
for $f \in X$.  We denote the set of analytic functions on the open
unit disk $\D$ of the complex plane by $H(\D)$.  When $X$ is $H(\D)$,
the research of composition operators is extensive, beginning with the
work of Nordgren \cite{Nord}.  For an excellent treatise on the
subject, the reader is directed to \cite{CowMac}.

One such function space of importance is the so called weighted Banach
space.  For a bounded and continuous function $v:\D \to (0,\infty)$,
define the weighted Banach space $H_v^\infty$ by
$$
H_v^\infty := \left\{f \in H(\D) : \|f\|_v = \sup_{z \in \D}
  v(z)|f(z)| < \infty\right\}.
$$
Under the norm $\|\cdot\|_v$, the set $H_v^\infty$ is a Banach space
that arises naturally in the fields of complex analysis, Fourier
analysis, and partial differential equations, see
\cite{BBG-1,BBG-2,BBG-3,Lus}.

In recent years, the study of operators on so called discrete function
spaces has increased.  Most recently, much work has been conducted on
spaces where $\Omega$ is an infinite, rooted tree.  In this setting,
discrete analogs to some classical function spaces on $\D$ have been
developed and operators studied, including the Bloch space
\cite{AllenColonnaEasley:2014,ColonnaEasley:2010}, the Hardy space
\cite{MuthukumarPonnusamy:2016,MuthukumarPonnusamy-I:2016}, and the
weighted Banach spaces
\cite{AllenCraig:2015,AllenPons:2016,AllenPons:2018}.

The study of linear dynamics is an increasingly active area of
research.  For a treatise on the subject, see for example
\cite{GrErPeris}. Connected to the study of such dynamics is the
question of when a composition operator $C_\varphi$ is hypercyclic.
Hypercyclic composition operators $C_\varphi$ on the weighted Banach
space $H_v^0$ were investigated in \cite{MirWolf}, while hypercyclic
scalar multiples of composition operators on $H_v^0$ were studied in
\cite{LiangZhou}. In the environment of directed trees, the
hypercyclicity of shifts on weighted $L^p$ spaces has been studied in
\cite{MA}.

For a recent exposition on hypercyclic composition operators on Banach
spaces of analytic functions, see \cite{CoMA}.  The purpose of this
paper is to bring the study of hypercyclic composition operators to
the setting of discrete function spaces, specifically weighted Banach
spaces.  In addition to this, we generalize the domains of the
function spaces from an infinite, rooted tree to an unbounded, locally
finite metric space with a distinguished point.

\subsection{Organization of the paper.}
In Section~\ref{Section:Weighted Spaces}, we define the weighted
Banach space $L_\mu(T)$ and little weighted Banach space $L_\mu^0(T)$
on an unbounded, locally finite metric space $T$ with a distinguished
element, with respect to a weight (i.e. a positive function) $\mu$ on
$T$. We also collect useful facts about these spaces.

In Sections~\ref{Section:BoundedBigSpace} and
\ref{Section:BoundedLittleSpace}, we characterize the bounded
composition operators acting on the weighted and little weighted
Banach spaces, respectively.  Finally in
Section~\ref{Section:Hypercyclic}, we show that no composition
operator is hypercyclic on $\Lmu$, and provide many necessary
conditions for $C_\varphi$ to be hypercyclic on $\Lmuz$.  We conclude
the section with an example of a hypercyclic composition operator on
$\Lmuz$.

\section{Weighted spaces}\label{Section:Weighted Spaces}

We start by defining a locally finite metric space and the two
function spaces that we will study throughout this paper.

\begin{defi}
Let $(T,\dist)$ be a metric space with a distinguished element
$\o$. Define $|v|:=\dist(\o,v)$. We say that $T$ is \textbf{locally
  finite} if for each $M>0$, the set $\{ v \in T \, : \, |v| \leq M\}$ is finite.
\end{defi}

\begin{defi}
Let $(T,\dist)$ be an unbounded, locally finite metric space with a
distinguished element $\o$ and let $\mu$ be a positive function on
$T$. We define the \textbf{weighted Banach space} on $T$ as the set
$\Lmu$ of functions $f: T \to \C$ such that
$$
\| f \|_\mu := \sup_{v\in T} \mu(v) |f(v)| < \infty.
$$
The {\it little weighted Banach space} is the subset $\Lmuz$ of $\Lmu$
whose elements $f$ satisfy the condition
$$
\lim_{|v| \to \infty} \mu(v) |f(v)| = 0.
$$
\end{defi}

First, we observe that the point evaluation functionals on $\Lmu$ are bounded.

\begin{lemma}\label{Lemma:boundedevaluation} Let $(T,\dist)$ be an
  unbounded, locally finite metric space with a distinguished element
  $\o$ and let $\mu$ be a positive function on $T$.  For $v \in T$ and $f \in \Lmu$, we
  have $$|f(v)| \leq \frac{1}{\mu(v)} \| f \|_\mu.$$ Moreover, the
  point evaluation functional $e_v$ is bounded on $\Lmu$.
\end{lemma}

\begin{proof}
For $v \in T$, observe
$$
|f(v)| \leq \frac{1}{\mu(v)} \sup_{u\in T} \mu(u) |f(u)| =
\frac{1}{\mu(v)} \| f \|_\mu.
$$
Thus, for $f \in \Lmu$ with $\|f\|_\mu \leq 1$, it follows
that
$$
\|e_v\| = \sup_{\|f\|_\mu = 1} \|e_v(f)\| = \sup_{\|f\|_\mu = 1} |f(v)| \leq
\frac{1}{\mu(v)},
$$
and thus the evaluation functionals are bounded on $\Lmu$.
\end{proof}

As expected, the weighted Banach space and little weighted Banach
spaces are, in fact, Banach spaces under the norm $\|\cdot\|_\mu$.
The following proof is included for completeness.  A version of this
result when $T$ is an infinite tree and $\dist$ is the edge counting
metric can be found in \cite{AllenCraig:2015}.

\begin{theorem}\label{Propostion:Banach Spaces}
Let $(T,\dist)$ be an unbounded, locally finite metric space with a
distinguished element $\o$ and let $\mu$ be a positive function on
$T$. Then, under the norm $\|\cdot\|_\mu$, $\Lmu$ is a Banach space.
\end{theorem}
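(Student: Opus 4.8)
The plan is the standard argument for completeness of a supremum-type norm, with the one ingredient particular to this setting being the boundedness of the point evaluation functionals from Lemma~\ref{Lemma:boundedevaluation}. First I would dispose of the routine verification that $\|\cdot\|_\mu$ is genuinely a norm on $\Lmu$: that $\Lmu$ is closed under addition and scalar multiplication follows from $\mu(v)|f(v)+g(v)| \le \mu(v)|f(v)| + \mu(v)|g(v)|$ and $\mu(v)|cf(v)| = |c|\,\mu(v)|f(v)|$ after taking suprema; positive homogeneity and the triangle inequality come the same way; and $\|f\|_\mu = 0$ forces $f \equiv 0$ because $\mu > 0$ everywhere. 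None of this deserves more than a sentence.

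The substance is completeness. Let $(f_n)$ be a Cauchy sequence in $\Lmu$. Fix $v \in T$; applying Lemma~\ref{Lemma:boundedevaluation} to $f_n - f_m$ gives $|f_n(v) - f_m(v)| \le \frac{1}{\mu(v)}\|f_n - f_m\|_\mu$, so $(f_n(v))_n$ is Cauchy in $\C$ and hence convergent. Define $f(v) := \lim_{n\to\infty} f_n(v)$; this produces a well-defined candidate function $f : T \to \C$.

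Next I would promote pointwise convergence to norm convergence while simultaneously checking $f \in \Lmu$. Given $\epsilon > 0$, pick $N$ with $\|f_n - f_m\|_\mu < \epsilon$ for all $n, m \ge N$, so that $\mu(v)|f_n(v) - f_m(v)| < \epsilon$ for every $v \in T$ and all $n, m \ge N$. Holding $v$ and $n \ge N$ fixed and letting $m \to \infty$ yields $\mu(v)|f_n(v) - f(v)| \le \epsilon$; since $v$ is arbitrary, $\|f_n - f\|_\mu \le \epsilon$ for all $n \ge N$. Writing $f = f_N - (f_N - f)$ then gives $\|f\|_\mu \le \|f_N\|_\mu + \epsilon < \infty$, so $f \in \Lmu$, and the same estimate shows $f_n \to f$ in $\|\cdot\|_\mu$. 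Thus every Cauchy sequence converges in $\Lmu$.

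There is no real obstacle here — this is the textbook proof that a space of weighted-bounded functions with the weighted sup norm is complete. The only thing one must be careful not to skip is the verification that the pointwise limit $f$ actually lies in $\Lmu$ (i.e., has finite weighted sup), which is why I interleave that check with the norm-convergence estimate rather than establishing it separately.
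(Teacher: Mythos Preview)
Your proof is correct and follows essentially the same route as the paper: use Lemma~\ref{Lemma:boundedevaluation} to obtain a pointwise limit $f$, then freeze $n$ and let $m\to\infty$ in the Cauchy estimate to get $\|f_n-f\|_\mu\le\epsilon$, from which both $f\in\Lmu$ and norm convergence follow. The only difference is cosmetic---you spell out the norm axioms and the triangle-inequality step for $f\in\Lmu$ a bit more explicitly, whereas the paper uses $\epsilon/2$ and phrases the membership check as ``$f_N-f\in\Lmu$ hence $f\in\Lmu$.''
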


\begin{proof}
The set $\Lmu$ is clearly a normed linear space. To show completeness,
let $\{f_n \}$ be a Cauchy sequence in $\Lmu$. By Lemma
\ref{Lemma:boundedevaluation}, for each $v \in T$, the sequence $\{
f_n(v) \}$ is also Cauchy, and hence converges to a complex number
$f(v)$. Now, given $\epsilon >0$, there exists $N \in \N$ such that,
for $n, m \geq N$ we have
$$
\mu(v) | f_n(v) - f_m(v) | \leq \| f_n - f_m \|_\mu < \epsilon/2,
$$
for all $v \in T$. Letting $m$ go to infinity, we obtain
$$
\mu(v) | f_n(v) - f(v) | \leq \epsilon/2,
$$
for each $v \in T$. Taking the supremum over $v \in T$ we obtain
$\|f_n - f \|_\mu < \epsilon$ for all $n \geq N$. In particular, $f_N
- f$ is in $\Lmu$ and hence $f \in \Lmu$.
\end{proof}

Recall that a functional Banach space is a Banach space of complex-valued
functions on a set such that all point evaluations are bounded linear
functionals, no point evaluation functional is identically $0$ and the
point evaluation functionals separate points.

Theorem \ref{Propostion:Banach Spaces} shows that  $\Lmu$ is a Banach
space, Lemma \ref{Lemma:boundedevaluation} shows that all point
evaluations are bounded; clearly, no point evaluation is
identically zero and point evaluations separate points. Hence, $\Lmu$
is  a functional Banach space.

Next, we show $\Lmuz$ to be a closed and separable subspace of $\Lmu$.

\begin{proposition}
Let $(T,\dist)$ be an unbounded, locally finite metric space with a
distinguished element $\o$ and let $\mu$ be a positive function on
$T$. The set $\Lmuz$ is a closed subspace of $\Lmu$.
\end{proposition}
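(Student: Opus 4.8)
The plan is to show that $\Lmuz$ is closed in $\Lmu$ by taking a sequence $\{f_n\}$ in $\Lmuz$ that converges in norm to some $f \in \Lmu$, and proving that $f$ itself satisfies the little-oh condition $\lim_{|v|\to\infty}\mu(v)|f(v)| = 0$. The key is a standard $\epsilon/2$ argument: fix $\epsilon > 0$, choose $n$ large enough that $\|f_n - f\|_\mu < \epsilon/2$, and then use that $f_n \in \Lmuz$ to find $M > 0$ so that $\mu(v)|f_n(v)| < \epsilon/2$ whenever $|v| > M$. For such $v$ we estimate
$$
\mu(v)|f(v)| \leq \mu(v)|f(v) - f_n(v)| + \mu(v)|f_n(v)| \leq \|f - f_n\|_\mu + \mu(v)|f_n(v)| < \epsilon.
$$

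First I would note that $\Lmuz$ is a linear subspace — this is immediate since $\mu(v)|(af+bg)(v)| \leq |a|\mu(v)|f(v)| + |b|\mu(v)|g(v)|$, so finite linear combinations of functions vanishing at infinity (in the weighted sense) again vanish at infinity. Then I would carry out the closedness argument above. One small point worth stating explicitly at the outset: norm convergence in $\Lmu$ implies that $\sup_{v\in T}\mu(v)|f_n(v) - f(v)| \to 0$, which is exactly what makes the first term in the triangle inequality uniformly small over all $v$, independent of which $v$ we are looking at. This uniformity is what lets the argument go through cleanly.

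There is no real obstacle here; this is a routine closed-subspace verification of the same flavor as the classical fact that $c_0$ is closed in $\ell^\infty$. The only thing one must be slightly careful about is keeping the quantifiers straight — the threshold $M$ depends on $\epsilon$ and on the chosen $n$, but since $n$ is itself chosen depending only on $\epsilon$, everything is consistent. I would present the proof in three short sentences: subspace, then the $\epsilon/2$ split, then conclude $f \in \Lmuz$ and hence $\Lmuz$ is closed.
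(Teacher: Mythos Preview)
Your proof is correct and follows essentially the same $\epsilon/2$ argument as the paper: pick $n$ so that $\|f_n-f\|_\mu<\epsilon/2$, then use $f_n\in\Lmuz$ to get $M$ with $\mu(v)|f_n(v)|<\epsilon/2$ for $|v|\ge M$, and conclude via the triangle inequality. The only addition is your explicit verification that $\Lmuz$ is a linear subspace, which the paper leaves implicit.
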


\begin{proof}
Let $\{ f_n \}$ be a sequence in $\Lmuz$ and assume $f_n \to f$ for
some $f \in \Lmu$. Let $\epsilon >0$ and choose $N \in \N$ such that
$\| f_N -f \|_\mu < \epsilon/2$. Let $M \in \N$ be chosen so that
$$
\mu (v) | f_N(v)| < \epsilon/2
$$
for all $v \in T$ with $|v|  \geq M$. Then, for all such $v$ we have
$$
\mu(v) |f(v)| \leq \mu(v) |f(v)-f_N(v)| + \mu(v) |f_N(v)| \leq \|
f_N-f\|_\mu + \mu(v) |f_N(v)| < \epsilon,
$$
which shows $f \in \Lmuz$.
\end{proof}

The space $\Lmuz$ is separable, as the next theorem shows.


\begin{theorem}
Let $(T,d)$ be an unbounded, locally finite metric space with a
distinguished element $\o$ and let $\mu$ be a positive function on
$T$. The set $$ \{ f: T \to \C \, : \, f \text{ has finite support}\,
\}$$ is dense in $\Lmuz$ and hence $\Lmuz$ is separable.
\end{theorem}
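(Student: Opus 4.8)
The plan is to show that any $f \in \Lmuz$ can be approximated in $\|\cdot\|_\mu$-norm by functions of finite support, the natural candidates being the truncations of $f$. First I would fix $f \in \Lmuz$ and $\epsilon > 0$. By the defining property of $\Lmuz$, there is $M > 0$ such that $\mu(v)|f(v)| < \epsilon$ for all $v \in T$ with $|v| \geq M$. Then I would define $f_M : T \to \C$ by setting $f_M(v) = f(v)$ when $|v| < M$ and $f_M(v) = 0$ otherwise. Here the local finiteness of $T$ enters: the set $\{v \in T : |v| < M\}$ is finite, so $f_M$ has finite support and in particular $f_M \in \Lmuz$ (it is trivially bounded in the weighted norm, the supremum being over a finite set of nonzero terms).

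Next I would estimate $\|f - f_M\|_\mu$. Since $f - f_M$ vanishes on $\{v : |v| < M\}$ and agrees with $f$ on $\{v : |v| \geq M\}$, we get
$$
\|f - f_M\|_\mu = \sup_{|v| \geq M} \mu(v)|f(v)| \leq \epsilon,
$$
using the choice of $M$. This shows that the finitely supported functions are dense in $\Lmuz$.

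Finally, to conclude separability, I would exhibit a countable dense subset. Enumerate $T = \{v_1, v_2, \dots\}$ (possible since $T$ is a countable union of the finite sets $\{v : |v| \leq n\}$, hence countable). Consider the set $\calS$ of functions of finite support taking values in $\Q + i\Q$; this set is countable. Given a finitely supported $g$ and $\delta > 0$, one can perturb the (finitely many) nonzero values of $g$ to nearby Gaussian-rational values so that the resulting function $h \in \calS$ satisfies $\|g - h\|_\mu < \delta$, since on the finite support the weights $\mu(v)$ are bounded above. Combining this with the density of finitely supported functions shows $\calS$ is dense in $\Lmuz$, so $\Lmuz$ is separable.

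I do not anticipate a genuine obstacle here; the only point requiring care is making sure $f_M$ actually lies in $\Lmuz$ (not merely in $\Lmu$), which is immediate because a finitely supported function satisfies $\lim_{|v|\to\infty}\mu(v)|f_M(v)| = 0$ vacuously. One should also be slightly careful that the truncation threshold $M$ need not be an integer, but this is harmless: replacing $M$ by $\lceil M \rceil$ if one prefers integer cutoffs changes nothing in the estimate.
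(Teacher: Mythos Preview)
Your proposal is correct and follows essentially the same approach as the paper: truncate $f$ to the finite ball $\{v:|v|<M\}$ using the defining decay condition of $\Lmuz$ and local finiteness, then pass to a countable dense set via $\Q+i\Q$-valued finitely supported functions. The paper phrases the truncation as a sequence $g_n=f\bigchi_{U_n}$ with $U_n=\{v:|v|<n\}$ and shows $g_n\to f$, but this is the same argument.
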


\begin{proof}
Let $f \in \Lmuz$. Given $\epsilon >0$, there exists $N \in \N$ such
that $$\mu(v) |f(v)| < \frac{\epsilon}{2},$$ for all $|v| \geq N$.
	
For $n \in \N$, let $U_n:=\{ v \in T \, : \, |v|< n\}$, which
is a finite set since $T$ is locally finite. Let $\bigchi_{U_n}$ be
the characteristic function of $U_n$ and define $g_n :=
f\bigchi_{U_n}$. Clearly, $g_n$ has finite support. Then, for each $n
\in \N$ we have
$$
\| f - g_n \|_\mu 
= \sup_{v \in T} \mu(v) |(f-g_n)(v)| 
= \sup_{v \in T} \mu(v) |f(v)| \,  |1-\bigchi_{U_n}(v)|  
= \sup_{|v| \geq n} \mu(v) |f(v)|,
$$
since  $1-\bigchi_{U_n}(v)=0$ whenever $|v| < n$.
	
Now, let $n \geq N$. Since if $|v|\geq N$ then $\mu(v) |f(v)| <
\frac{\epsilon}{2}$, we have
$$\| f - g_n \|_\mu = \sup_{|v| \geq n} \mu(v) |f(v)| <\epsilon,
$$
which proves that $\lim_{n \to \infty} g_n = f$, as desired.

Clearly the set $\{ f: T \to \C \, : \, f \text{ has finite support}
\}$ can be approximated by the set $\{ f: T \to \Q+i\Q \, : \, f
\text{ has finite support} \}$, which is countable since $T$ is countable. Hence $\Lmuz$ is
separable.
\end{proof}

On the other hand, $\Lmu$ is never separable: indeed, define
\[
\calF := \{ f \in \Lmu \, : \, f(v) \in \{0, 1/\mu(v) \} \text{ for
  each } v \in T \}.
\]
Clearly, if $f, g \in \calF$ and $f \neq g$, then $\| f - g\| =
1$. Since $\calF$ is uncountable, it follows (for example, see
\cite[Proposition 1.12.1]{meg}) that $\Lmu$ is not separable.

\section{Boundedness of Composition Operators on the big
  space}\label{Section:BoundedBigSpace}

In this section, we study when composition operators are bounded on
$\Lmu$ and we obtain a complete characterization of such operators.

\begin{theorem}\label{Theorem:Bounded Lmu}
Let $(T,\dist)$ be an unbounded, locally finite metric space with a
distinguished element $\o$, let $\mu$ be a positive function on $T$
and let $\phi$ be a self-map of $T$. The operator $C_\phi$ is bounded
on $\Lmu$ if and only if
$$
\sup_{v \in T}\frac{\mu(v)}{\mu(\phi(v))} < \infty.
$$
Furthermore, $\ds \| C_\phi \|=\sup_{v \in T}\frac{\mu(v)}{\mu(\phi(v))}$.
\end{theorem}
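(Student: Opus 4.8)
The plan is to prove both directions by working with the explicit norm formula. Set $K := \sup_{v \in T} \mu(v)/\mu(\phi(v))$, which may a priori be $+\infty$.

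First I would prove that if $K < \infty$ then $C_\phi$ is bounded with $\|C_\phi\| \le K$. Take $f \in \Lmu$. For each $v \in T$ we have
$$
\mu(v)\,|(C_\phi f)(v)| = \mu(v)\,|f(\phi(v))| = \frac{\mu(v)}{\mu(\phi(v))}\,\mu(\phi(v))\,|f(\phi(v))| \le K\,\|f\|_\mu,
$$
using that $\mu(\phi(v))|f(\phi(v))| \le \sup_{u\in T}\mu(u)|f(u)| = \|f\|_\mu$. Taking the supremum over $v \in T$ gives $\|C_\phi f\|_\mu \le K\|f\|_\mu$, so $C_\phi$ is bounded and $\|C_\phi\| \le K$.

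For the reverse inequality (and the converse direction), I would show $\|C_\phi\| \ge K$, which simultaneously handles the "only if" part: if $C_\phi$ is bounded then $K = \|C_\phi\| < \infty$. The idea is to test $C_\phi$ on the normalized point-mass type functions. Fix $w \in T$ and consider $f_w := \frac{1}{\mu(w)}\bigchi_{\{w\}}$; since $T$ is locally finite, $\{w\}$ is a legitimate finite-support function, $f_w \in \Lmuz \subseteq \Lmu$, and $\|f_w\|_\mu = \mu(w)\cdot\frac{1}{\mu(w)} = 1$. Then for any $v$ with $\phi(v) = w$ we compute
$$
\|C_\phi f_w\|_\mu \ge \mu(v)\,|f_w(\phi(v))| = \mu(v)\,|f_w(w)| = \frac{\mu(v)}{\mu(w)}.
$$
Hence $\|C_\phi\| \ge \sup\{\mu(v)/\mu(w) : \phi(v) = w\}$ for each $w$ in the range of $\phi$, and taking the supremum over all such $w$ gives $\|C_\phi\| \ge \sup_{v \in T}\mu(v)/\mu(\phi(v)) = K$. (If $w$ is not in the range of $\phi$ it contributes nothing, so the supremum over $v \in T$ is exactly the supremum over $w \in \phi(T)$ of the inner suprema.) Combining the two inequalities yields $\|C_\phi\| = K$, and in particular $C_\phi$ is bounded if and only if $K < \infty$.

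The only mild subtlety — and the step I would be most careful about — is the bookkeeping in the lower bound: making sure that $\sup_{v\in T}\mu(v)/\mu(\phi(v))$ is genuinely recovered by ranging the test functions over the image $\phi(T)$, and confirming that $\bigchi_{\{w\}}$ indeed lies in the space (which it does, being finitely supported, by the density theorem's setup, or directly since $\mu(v)|f_w(v)|$ is nonzero at only one point). No completeness or limiting arguments are needed; everything reduces to the supremum definition of $\|\cdot\|_\mu$ and evaluation at points.
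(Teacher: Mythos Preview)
Your proof is correct and follows essentially the same approach as the paper: the upper bound via the pointwise estimate and the lower bound via normalized characteristic functions at single points. The only cosmetic difference is that the paper indexes its test function by a preimage point $w$ (taking $g_w = \tfrac{1}{\mu(\phi(w))}\bigchi_{\{\phi(w)\}}$ and evaluating at $v=w$), which sidesteps the need to range over $w\in\phi(T)$, whereas you index by the image point and then pick a preimage---but the content is identical.
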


\begin{proof}
Let $\ds C:=\sup_{v \in T}\frac{\mu(v)}{\mu(\phi(v))}$.
	
First, assume that $C < \infty$. Let $f \in \Lmu$. Then
\begin{eqnarray*}
\| C_\phi f \|_\mu  &=& \sup_{v \in T} \mu(v) | f(\phi(v))| \\
&=&  \sup_{v \in T} \frac{\mu(v)}{ \mu(\phi(v))}  \mu(\phi(v)) | f(\phi(v))| \\
&\leq& C  \sup_{v \in T} \mu(\phi(v)) | f(\phi(v))| \\
&\leq& C  \sup_{w \in T} \mu(w) | f(w)| \\
&=& C \| f \|_\mu,
\end{eqnarray*}
which shows that $C_\phi$ is bounded and $\| C_\phi \| \leq C$.
	
Now, for $w \in T$ define $g_w(v) :=
\frac{1}{\mu(v)}\bigchi_{\varphi(w)}(v)$. Clearly $\|g_w \|_\mu
=1$. Then
$$\| C_\phi \|  = \sup_{\| f \|_\mu=1} \| C_\phi f \|_\mu \geq  \|
C_\phi g_w \|_\mu = \sup_{v \in T} \frac{\mu(v)}{\mu(\phi(v))}
\chi_{\phi(w)} (\phi(v)) \geq \frac{\mu(w)}{\mu(\phi(w))}.$$
Hence, if $C_\phi$ is bounded, then
$$
\sup_{w\in T} \frac{\mu(w)}{\mu(\phi(w))} < \infty
$$ 
and
$$
\sup_{w \in T}\frac{\mu(w)}{\mu(\phi(w))} \leq \| C_\phi \|,
$$ which finishes the proof.
\end{proof}

In the remainder of the paper, we shall restrict our attention to
composition operators that are bounded on the space $\Lmu$. For easier
reference, we adopt the following definition.

\begin{defi}
Let $(T,\dist)$ be an unbounded, locally finite metric space with a
distinguished element $\o$,  $\mu$ a positive function on $T$, and
$\phi$ be a self-map of $T$. We say $\varphi$ is an \textbf{admissible
  symbol} for a composition operator (or simply \textbf{admissible})
if  the composition operator $C_\varphi$ is bounded on $\Lmu$.  
\end{defi}

Note by Theorem ~\ref{Theorem:Bounded Lmu}, $\varphi$ is admissible if and
only if there exists a positive constant $C$ such that $$\mu(w) \leq C
\mu(\phi(w))$$ for all $w \in T$.

The following proposition shows that, with an additional hypothesis on
$\phi$, admissibility is characterized by boundedness of the weight.

\begin{proposition}\label{Proposition:mu bounded}
Let $(T,\dist)$ be an unbounded, locally finite metric space with a
distinguished element $\o$, let $\mu$ be a positive function on $T$
and let $\phi$ be a self-map of $T$. If $\phi$ has finite range, then
$\varphi$ is admissible if and only if $\mu$ is bounded.
\end{proposition}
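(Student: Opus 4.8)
The plan is to prove both implications using the characterization from Theorem~\ref{Theorem:Bounded Lmu}, namely that $\varphi$ is admissible if and only if $C:=\sup_{v\in T}\frac{\mu(v)}{\mu(\varphi(v))}<\infty$.

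For the easy direction, suppose $\mu$ is bounded, say $\mu(v)\leq K$ for all $v\in T$. Since $\varphi$ has finite range, the set $\varphi(T)$ is finite, so $m:=\min_{w\in\varphi(T)}\mu(w)>0$. Then for every $v\in T$ we have $\frac{\mu(v)}{\mu(\varphi(v))}\leq \frac{K}{m}$, so $C<\infty$ and $\varphi$ is admissible.

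For the converse, suppose $\varphi$ is admissible but $\mu$ is \emph{not} bounded; I will derive a contradiction. Unboundedness of $\mu$ gives a sequence $(v_n)$ in $T$ with $\mu(v_n)\to\infty$. Since $\varphi$ has finite range, the values $\varphi(v_n)$ lie in the finite set $\varphi(T)$, so by pigeonhole some $w_0\in\varphi(T)$ satisfies $\varphi(v_n)=w_0$ for infinitely many $n$. Along that subsequence, $\frac{\mu(v_n)}{\mu(\varphi(v_n))}=\frac{\mu(v_n)}{\mu(w_0)}\to\infty$, contradicting $C<\infty$. Hence $\mu$ must be bounded.

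The argument is short, and the only point requiring care is the pigeonhole step: it is essential that $\varphi(T)$ be a \emph{finite} set (not merely that each fiber be finite), which is exactly the hypothesis that $\varphi$ has finite range; this lets us extract a single value $w_0$ hit infinitely often and thereby fix the denominator $\mu(w_0)$. No appeal to local finiteness of $T$ is needed beyond what is already built into the standing assumptions.
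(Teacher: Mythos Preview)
Your proof is correct and follows essentially the same approach as the paper. The backward direction is identical; for the forward direction the paper argues directly---from $\mu(v)\le C\mu(\phi(v))$ and the finiteness of $\{\mu(\phi(v)):v\in T\}$ one immediately gets $\mu$ bounded---whereas you reach the same conclusion by contradiction via a pigeonhole step, which is a minor stylistic variation rather than a different idea.
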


\begin{proof} 
Assume $\varphi$ is admissible. Then, there exists $C>0$ such that
$\mu(v) \leq C \mu(\phi(v))$ for all $v\in T$. Since $\phi$ has finite
range, the set $\{\mu(\phi(v)) \, : \, v \in T \}$ is finite and hence
bounded. Hence $\mu$ is bounded as well.
	
Conversely, suppose $\mu$ is bounded with upper bound $M>0$. Since
$\varphi$ has finite range, $\delta:=\min_{v\in T}\mu(\varphi(v))>0$.
Thus, by Theorem~\ref{Theorem:Bounded Lmu},
$$
\|C_\varphi\|=\sup_{v\in T}\frac{\mu(v)}{\mu(\varphi(v))}\le
\frac{M}{\delta},
$$
proving that $\varphi$ is admissible.
\end{proof}

\section{Boundedness of Composition Operators on the little
  space} \label{Section:BoundedLittleSpace}

The first result in this section, which highlights the connection to
Proposition \ref{Proposition:mu bounded} for the big space case, gives
a necessary and sufficient condition for $C_\varphi$ to be bounded on
$\Lmuz$ whenever $\varphi$ has finite range.

\begin{proposition}\label{prop:finiterange}
Let $(T,\dist)$ be an unbounded, locally finite metric space with a
distinguished element $\o$, let $\mu$ be a positive function on $T$
and let $\phi$ be a self-map of $T$. If $\phi$ has finite range, then
$C_\phi$ is bounded on $\Lmuz$ if and only if $\ds \lim_{|v|\to \infty
}\mu(v)=0$.
\end{proposition}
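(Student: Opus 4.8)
The plan is to prove both implications directly, leveraging Theorem~\ref{Theorem:Bounded Lmu} (boundedness on $\Lmu$) and the finite-range hypothesis. First I would handle the easier direction: assume $\lim_{|v|\to\infty}\mu(v)=0$. Since $\phi$ has finite range, $\mu\circ\phi$ takes only finitely many values, so $\delta:=\min_{v\in T}\mu(\phi(v))>0$; meanwhile the hypothesis $\mu(v)\to 0$ forces $\mu$ to be bounded (its values are eventually small and only finitely many vertices have $|v|\le M$). Hence $\sup_v \mu(v)/\mu(\phi(v))<\infty$, so by Theorem~\ref{Theorem:Bounded Lmu} the operator $C_\phi$ is bounded on $\Lmu$, and in particular $C_\phi$ maps $\Lmu$ into itself boundedly. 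It then remains to check that $C_\phi$ maps $\Lmuz$ into $\Lmuz$: for $f\in\Lmuz$ and a vertex $v$ with $|v|$ large, $\mu(v)|f(\phi(v))| = \bigl(\mu(v)/\mu(\phi(v))\bigr)\,\mu(\phi(v))|f(\phi(v))|$; the first factor is bounded and the second is bounded by $\|f\|_\mu$, but that alone does not give decay. Instead I use that $\mu(v)\to 0$ directly together with the fact that $f$ is bounded on the finite range of $\phi$: $\mu(v)|f(\phi(v))| \le \mu(v)\cdot \max_{w\in\mathrm{ran}\,\phi}|f(w)| \to 0$ as $|v|\to\infty$. So $C_\phi f\in\Lmuz$, and boundedness on $\Lmuz$ follows since it is a closed subspace of $\Lmu$ on which $C_\phi$ already acts boundedly.

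For the converse, assume $C_\phi$ is bounded on $\Lmuz$ and suppose for contradiction that $\mu(v)\not\to 0$ as $|v|\to\infty$. Then there is $\epsilon>0$ and a sequence of distinct vertices $v_n$ with $|v_n|\to\infty$ and $\mu(v_n)\ge\epsilon$ for all $n$. Since $\phi$ has finite range, by the pigeonhole principle infinitely many of the $\phi(v_n)$ coincide, say $\phi(v_{n_k})=w_0$ for all $k$. Consider the characteristic function $g:=\tfrac{1}{\mu(w_0)}\bigchi_{w_0}$, which lies in $\Lmuz$ (it has finite support) with $\|g\|_\mu=1$. Then $C_\phi g = g\circ\phi$ satisfies $(C_\phi g)(v_{n_k}) = g(w_0) = 1/\mu(w_0)$, so $\mu(v_{n_k})\,|(C_\phi g)(v_{n_k})| = \mu(v_{n_k})/\mu(w_0) \ge \epsilon/\mu(w_0)>0$ for all $k$, with $|v_{n_k}|\to\infty$. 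This shows $C_\phi g\notin\Lmuz$, contradicting that $C_\phi$ is bounded on $\Lmuz$ (in particular, maps $\Lmuz$ into itself). Hence $\mu(v)\to 0$.

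The main obstacle, such as it is, lies in the converse: one must produce a single test function in $\Lmuz$ whose image under $C_\phi$ fails the little-oh condition, and this requires the pigeonhole step to pin the images $\phi(v_n)$ to a fixed vertex — without finite range a constant-magnitude tail of $\mu$ would not suffice to break the little space condition. A secondary subtlety is making sure in the first direction that the image of a general $f\in\Lmuz$ decays; the clean observation is that $f$ restricted to the finite range of $\phi$ is just a bounded (finite) set of values, so the decay of $\mu$ does all the work. Everything else is a direct application of Theorem~\ref{Theorem:Bounded Lmu} and the definitions of $\Lmu$ and $\Lmuz$.
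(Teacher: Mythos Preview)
Your proof is correct and follows essentially the same strategy as the paper's, with two minor differences worth noting. In the forward direction, you invoke pigeonhole to pin down a single $w_0$ and test with $\bigchi_{w_0}$; the paper instead tests with $\bigchi_S$ where $S=\phi(T)$ is the whole finite range, so that $\bigchi_S(\phi(v_n))=1$ automatically for every $n$ and no subsequence is needed---so your remark that the argument ``requires the pigeonhole step'' is not quite accurate. In the reverse direction, you establish boundedness by first invoking Theorem~\ref{Theorem:Bounded Lmu} on the big space (using that $\mu$ is bounded and $\mu\circ\phi$ is bounded below) and then restricting; the paper instead shows directly that $C_\phi$ maps $\Lmuz$ into itself via the same estimate $\mu(v)|f(\phi(v))|\le M\mu(v)\to 0$ and then appeals to the Closed Graph Theorem for boundedness. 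Both routes are short and valid.
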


\begin{proof}
Let $S=\phi(T)$ be a finite set. Suppose first that $C_\varphi$ is
bounded on $\Lmuz$, and for purposes of contradiction assume
$\lim_{|v|\to \infty} \mu(v)$ does not equal zero. Then there exist
$c>0$ and a sequence $\{ v_n \}$ in
$T$ such that $|v_n| \to \infty$ and $\mu(v_n) \geq c$. Since $\chi_S$
is in $\Lmuz$, then $C_\varphi(\bigchi_S) = \bigchi_S \circ \phi$ is in $\Lmuz$. But
$$
\mu(v_n) | \chi_S (\phi(v_n)) | = \mu(v_n) \not\to 0,
$$
which is a contradiction. Hence $\lim_{|v|\to \infty} \mu(v) =0$.

		
Conversely, suppose $\mu(v) \to 0$ as $|v| \to \infty$.  To show that $C_\varphi$ is bounded on $\Lmuz$ it suffices to
show that $f \circ \phi$ is in $\Lmuz$ for each $f \in \Lmuz$ by the
Closed Graph Theorem (see \cite[Exercise 1.1.1]{CowMac}). Fixing such
an $f$, let $M:=\max_{v \in T} |f(\phi(v))|$. Then
$$
0 \leq \lim_{|v| \to \infty} \mu(v) |f(\phi(v))| \leq \lim_{|v| \to
  \infty} M \mu(v) =0,
$$
and thus $f \circ \phi \in \Lmuz$, as desired.
\end{proof}

Observe that, unlike the case for $\Lmu$ (see
Theorem~\ref{Theorem:Bounded Lmu}) admissibility itself cannot be
enough to guarantee boundedness of $C_\phi$ on $\Lmuz$. For example,
Proposition~\ref{prop:finiterange}  guarantees that if $\phi$ has
finite range and $\mu$ is does not go to zero, then $C_\phi$ does not
map $\Lmuz$ into $\Lmuz$ even in the event that $\phi$ is
admissible. Nevertheless, admissibility of $\phi$ is a necessary
condition for the boundedness of the composition operator $C_\phi$ on
$\Lmuz$.

\begin{proposition}\label{Proposition:necessary} 
Let $(T,\dist)$ be an unbounded, locally finite metric space with a
distinguished element $\o$, let $\mu$ be a positive function on $T$
and let $\phi$ be a self-map of $T$. If $C_\varphi$ is bounded on
$\Lmuz$, then $\varphi$ is admissible.
\end{proposition}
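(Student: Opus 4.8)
The plan is to prove the contrapositive: assuming $\varphi$ is \emph{not} admissible, I will show $C_\varphi$ fails to be bounded on $\Lmuz$. By the characterization following Theorem~\ref{Theorem:Bounded Lmu}, non-admissibility means $\sup_{v\in T}\frac{\mu(v)}{\mu(\phi(v))}=\infty$, so there is a sequence $\{v_n\}$ in $T$ with $\frac{\mu(v_n)}{\mu(\phi(v_n))}\to\infty$. The idea is to feed the operator the same family of ``bump'' functions $g_w(v)=\frac{1}{\mu(v)}\bigchi_{\phi(w)}(v)$ used in the proof of Theorem~\ref{Theorem:Bounded Lmu}, which have $\|g_w\|_\mu=1$ and satisfy $\|C_\varphi g_w\|_\mu\ge \frac{\mu(w)}{\mu(\phi(w))}$; applying this with $w=v_n$ drives the norm of the image to infinity. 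The one thing to check is that these test functions actually live in $\Lmuz$, not merely in $\Lmu$: but each $g_w$ has finite support (it is supported on the single point $\phi(w)$), hence trivially $\mu(v)|g_w(v)|\to 0$ as $|v|\to\infty$, so $g_w\in\Lmuz$.

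First I would record that non-admissibility yields the sequence $\{v_n\}$ with $\frac{\mu(v_n)}{\mu(\phi(v_n))}\to\infty$. Next I would define $g_n:=g_{v_n}$ as above, observe $g_n\in\Lmuz$ with $\|g_n\|_\mu=1$, and compute
$$
\|C_\varphi g_n\|_\mu=\sup_{v\in T}\mu(v)\,|g_n(\phi(v))|
=\sup_{v\in T}\frac{\mu(v)}{\mu(\phi(v))}\,\bigchi_{\phi(v_n)}(\phi(v))
\ge \frac{\mu(v_n)}{\mu(\phi(v_n))},
$$
the last inequality coming from the term $v=v_n$. Since the right-hand side tends to infinity, $C_\varphi$ is unbounded on $\Lmuz$, contradicting the hypothesis; hence $\varphi$ is admissible.

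I expect no serious obstacle here: the argument is essentially the lower-bound half of the proof of Theorem~\ref{Theorem:Bounded Lmu}, and the only genuinely new point is the (easy) remark that the bump functions have finite support and therefore belong to the little space. If one prefers to avoid extracting a sequence, the same computation shows directly that $\sup_{\|f\|_\mu=1,\,f\in\Lmuz}\|C_\varphi f\|_\mu\ge\sup_{w\in T}\frac{\mu(w)}{\mu(\phi(w))}$, so boundedness of $C_\varphi$ on $\Lmuz$ forces the supremum to be finite, i.e. $\varphi$ admissible.
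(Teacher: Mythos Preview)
Your proposal is correct and is essentially identical to the paper's own argument: the paper simply remarks that the lower-bound half of the proof of Theorem~\ref{Theorem:Bounded Lmu} goes through because the test functions $g_w$ already lie in $\Lmuz$, which is exactly the observation you make (finite support implies membership in the little space). Your contrapositive framing and the direct supremum version you mention at the end are just two phrasings of the same one-line justification.
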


\noindent This follows from the proof of Theorem~\ref{Theorem:Bounded
  Lmu} since the test function $g_w$ used is an element of $\Lmuz$.


By the previous result, to show that $C_\varphi$ is bounded on $\Lmuz$
it is permissible to assume $C_\varphi$ is bounded on $\Lmu$.  This
additional hypothesis allows us to replace the assumption of finite
range with admissibility.

\begin{proposition}\label{pro_mu_to_zero}
Let $(T,\dist)$ be an unbounded, locally finite metric space with a
distinguished element $\o$,  $\mu$ be a positive function on $T$ and
$\phi$ admissible. If $\lim_{|v|\to \infty} \mu(v) =0$, then $C_\phi$
is bounded on $\Lmuz$.
\end{proposition}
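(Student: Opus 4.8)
The plan is to reduce the boundedness claim to the single statement that $C_\phi$ leaves $\Lmuz$ invariant. Indeed, since $\phi$ is admissible, $C_\phi$ is bounded on $\Lmu$ with $\|C_\phi\| = C$, where $C := \sup_{v \in T} \mu(v)/\mu(\phi(v))$ is by Theorem~\ref{Theorem:Bounded Lmu} a positive real number (positive because $T \neq \emptyset$ and each ratio is positive, finite by admissibility), and $\Lmuz$ is a closed subspace of $\Lmu$ by the proposition above. So once we know $C_\phi(\Lmuz) \subseteq \Lmuz$, the restriction of the bounded operator $C_\phi$ to the invariant subspace $\Lmuz$ is automatically bounded, with no further work. (Alternatively, one may finish via the Closed Graph Theorem exactly as in the proof of Proposition~\ref{prop:finiterange}.)

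To prove the invariance, fix $f \in \Lmuz$ and $\epsilon > 0$. First I would use $f \in \Lmuz$ to choose $N \in \N$ with $\mu(w)|f(w)| < \epsilon/C$ for all $w \in T$ with $|w| \geq N$. The ball $A := \{w \in T : |w| < N\}$ is finite by local finiteness, so $K := 1 + \max_{w \in A} |f(w)|$ is a finite positive number; and since $\mu(v) \to 0$ as $|v| \to \infty$, there is $M \in \N$ with $\mu(v) < \epsilon/K$ whenever $|v| \geq M$. Now, for any $v \in T$ with $|v| \geq M$, split into cases according to the size of $\phi(v)$. If $|\phi(v)| \geq N$, then
$$
\mu(v)|f(\phi(v))| = \frac{\mu(v)}{\mu(\phi(v))}\,\mu(\phi(v))|f(\phi(v))|
\leq C\,\mu(\phi(v))|f(\phi(v))| < C \cdot \frac{\epsilon}{C} = \epsilon.
$$
If instead $|\phi(v)| < N$, then $\phi(v) \in A$, so $|f(\phi(v))| < K$ and hence $\mu(v)|f(\phi(v))| < K\mu(v) < \epsilon$. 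In either case $\mu(v)|f(\phi(v))| < \epsilon$ for all $|v| \geq M$, so $\lim_{|v| \to \infty} \mu(v)|f(\phi(v))| = 0$, that is, $C_\phi f \in \Lmuz$.

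The step I expect to require the most care is the observation that $\phi$ may send infinitely many vertices of $T$ into the finite set $A$, so one cannot simply argue that $|\phi(v)| \to \infty$ as $|v| \to \infty$. This is exactly where the hypothesis $\mu(v) \to 0$ earns its keep: on the ``bad'' vertices $v$ with $|\phi(v)| < N$ the factor $|f(\phi(v))|$ is uniformly bounded, and the decay of $\mu$ alone then forces $\mu(v)|f(\phi(v))| \to 0$ there, while on the remaining vertices the admissibility constant $C$ together with the little-oh decay of $f$ does the job.
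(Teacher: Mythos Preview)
Your proof is correct and follows essentially the same route as the paper: you reduce to showing $C_\phi(\Lmuz)\subseteq\Lmuz$ via admissibility, then split on whether $|\phi(v)|$ is large (using the admissibility constant and $f\in\Lmuz$) or small (using the finite bound on $|f|$ over a finite ball together with $\mu(v)\to 0$). The only differences are cosmetic---your $K=1+\max$ in place of the paper's $m$ and $m+1$, and the strict versus non-strict inequality defining the finite ball---and your closing paragraph explaining where the hypothesis $\mu(v)\to 0$ is essential is a nice addition not present in the original.
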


\begin{proof}
Let $C >0$ be a constant such that $\mu(w) \leq C \mu(\phi(w))$ for
all $w \in T$. Fix $\epsilon >0$ and suppose $f \in \Lmuz$. We can
then choose $N_1 \in \N$ such that
\begin{equation}\label{eq1}
\mu(v) |f(v)| < \frac{\epsilon}{C}, \qquad \hbox{ for all } |v| \geq N_1. 
\end{equation}
Let $S:=\{ v \in T \, : \, |v| \leq N_1 \}$, which is a finite set
since $T$ is locally finite, and let $\ds m:=\max_{v \in S} |f(v)|$.
Also, since $\mu(v) \to 0$ as $|v| \to \infty$, we can choose $N_2 \in
\N$ such that
\begin{equation}\label{eq2}
\mu(v) < \frac{\epsilon}{m+1}, \qquad \hbox{ for all } |v|\geq N_2.
\end{equation}
	
Now, if $|\phi(v)| \geq N_1$, then, by \eqref{eq1}, we have $$\mu(v)
|f(\phi(v))| \leq C \mu(\phi(v)) |f(\phi(v))| < C \frac{\epsilon}{C} =
\epsilon.$$ Also, if $|v|\geq N_2$ and $|\phi(v)| \leq N_1$, then, by
\eqref{eq2} and the definition of $m$, we have $$\mu(v) | f(\phi(v))|
\leq \mu(v) m \leq \frac{\epsilon}{m+1} m < \epsilon.$$ Therefore, if
$|v|\geq N_2$, we have $\mu(v) | f(\phi(v))| < \epsilon$, which shows
that $f\circ \phi \in \Lmuz$. Since $C_\phi$ is bounded on $\Lmu$,
this implies that $C_\phi$ is bounded on $\Lmuz$.
\end{proof}

The previous result gave a condition on $\mu$ that yields the
boundedness of the composition operator. The following proposition
gives instead a condition on the symbol $\phi$.

\begin{proposition}\label{pro_phi_minus_one_finite}
Let $(T,\dist)$ be an unbounded, locally finite metric space with a
distinguished element $\o$, $\mu$ be a positive function on $T$ and
$\phi$ admissible. If $\phi^{-1}(v)$ is a finite set for every $v \in
T$, then $C_\phi$ is bounded on $\Lmuz$.
\end{proposition}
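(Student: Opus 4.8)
The plan is to run the same scheme used in Propositions~\ref{prop:finiterange} and~\ref{pro_mu_to_zero}. Since $\phi$ is admissible, $C_\phi$ is bounded on $\Lmu$ by Theorem~\ref{Theorem:Bounded Lmu}, and $\Lmuz$ is a closed (hence Banach) subspace of $\Lmu$. Because convergence in $\Lmuz$ implies convergence in $\Lmu$ (the norm is the same) and $C_\phi$ is already known to be continuous on $\Lmu$, the graph of $C_\phi$ viewed as an operator $\Lmuz \to \Lmuz$ is closed. Thus, by the Closed Graph Theorem, it suffices to show that $C_\phi$ maps $\Lmuz$ into itself, i.e.\ that $\lim_{|v|\to\infty}\mu(v)|f(\phi(v))|=0$ for every $f\in\Lmuz$.

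The key step, which I expect to be the crux of the argument, is to observe that the hypothesis that $\phi^{-1}(v)$ is finite for every $v\in T$ forces $|\phi(v)|\to\infty$ as $|v|\to\infty$. Indeed, fix $M>0$. The ball $\{u\in T:|u|\le M\}$ is finite by local finiteness, so $\phi^{-1}\big(\{u:|u|\le M\}\big)=\bigcup_{|u|\le M}\phi^{-1}(u)$ is a finite union of finite sets, hence finite. Therefore $\{v\in T:|\phi(v)|\le M\}$ is finite, so it is contained in some ball $\{v:|v|<N\}$; equivalently, $|v|\ge N$ implies $|\phi(v)|>M$. Since $M$ was arbitrary, $|\phi(v)|\to\infty$ as $|v|\to\infty$.

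With this in hand the rest is routine. Let $C>0$ satisfy $\mu(w)\le C\,\mu(\phi(w))$ for all $w\in T$, which exists by admissibility. Given $f\in\Lmuz$ and $\epsilon>0$, first choose $M$ so that $\mu(w)|f(w)|<\epsilon/C$ whenever $|w|\ge M$, and then, using the limit just established, choose $N$ so that $|v|\ge N$ implies $|\phi(v)|\ge M$. For such $v$ we get
$$
\mu(v)\,|f(\phi(v))|\ \le\ C\,\mu(\phi(v))\,|f(\phi(v))|\ <\ C\cdot\frac{\epsilon}{C}\ =\ \epsilon,
$$
so $f\circ\phi\in\Lmuz$. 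Hence $C_\phi(\Lmuz)\subseteq\Lmuz$, and the Closed Graph Theorem yields boundedness of $C_\phi$ on $\Lmuz$. No genuine obstacle remains once the statement $|\phi(v)|\to\infty$ is in place; the only point requiring a little care is the justification that the Closed Graph Theorem applies, handled in the first paragraph.
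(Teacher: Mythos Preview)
Your proof is correct and follows essentially the same route as the paper: establish $|\phi(v)|\to\infty$ from the finite-preimage hypothesis, then use admissibility to push $\mu(v)|f(\phi(v))|$ below $C\,\mu(\phi(v))|f(\phi(v))|\to 0$. Two minor differences worth noting: you prove $|\phi(v)|\to\infty$ directly (the preimage of any ball is a finite union of finite sets), whereas the paper argues by contradiction via pigeonhole to produce a single point with infinite preimage; and you invoke the Closed Graph Theorem for the boundedness step, while the paper simply observes that admissibility already gives a bound on $\|C_\phi\|$ on all of $\Lmu$, hence on the subspace $\Lmuz$ once invariance is shown. Neither difference is substantive.
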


\begin{proof}
We claim first that $\ds \lim_{|v|\to \infty} |\phi(v)|=\infty$. If
not, there would exist $M_0 >0$ and a sequence $\{ v_n \}$ in $T$ such
that $|v_n|\geq n$ and $|\phi(v_n)|< M_0$ for each $n\in\N$. But this
implies that there exist $w \in T$ with $|w|<M_0$ and an infinite
subset $V$ of $T$ such that $\phi(v)=w$ for each $v \in V$, which
contradicts the hypothesis.
	
Let $f \in \Lmuz$. Since $\phi$ is admissible, there exists $C>0$ such
that $\mu(v) \leq C \mu(\phi(z))$ for all $v \in T$. Then,
$$
\lim_{|v|\to \infty} \mu(v) |(f\circ \phi) (v)| \leq  \lim_{|v|\to
  \infty} C \mu(\phi(v)) |f(\phi(v))|  = C \lim_{|\phi(v)|\to \infty}
\mu(\phi(v)) |f(\phi(v))| = 0.
$$
Thus, $C_\phi$ maps $\Lmuz$ into itself. The boundedness of the
operator on  $\Lmuz$ follows at once from the admissibility of $\phi$.
\end{proof}

Observe that the previous proposition includes the case where the
function $\phi$ is injective. For example, $T$ might be an infinite
and locally finite graph with the edge-counting metric $d$, $\mu$ any
positive function on $T$ and $\phi$  any admissible automorphism of
the graph.

The following proposition allows $\phi^{-1}(v)$ to be infinite for
finitely many $v \in T$, but $\mu$ must tend to zero on the inverse
image of such vertices.

\begin{proposition}\label{pro_phi_minus_one_infinite}
Let $(T,\dist)$ be an unbounded, locally finite metric space with a
distinguished element $\o$, $\mu$ be a positive function on $T$ and
$\phi$ an admissible function. Consider the set $\calS:= \{ w \in T \,
: \, \phi^{-1}(w) \text{ is infinite} \}$ and assume $\calS$ is
  nonempty. If $\calS$ is finite and
$$
\lim_{\substack{|v|\to \infty \\ v \in \phi^{-1}(\calS)}} \mu(v) = 0,
$$
then $C_\phi$ is bounded on $\Lmuz$.
\end{proposition}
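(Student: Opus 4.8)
The plan is to follow the template established in Propositions~\ref{pro_mu_to_zero} and~\ref{pro_phi_minus_one_finite}. Since $\phi$ is admissible, $C_\phi$ is bounded on $\Lmu$, so it suffices to show that $C_\phi$ maps $\Lmuz$ into itself; that is, fixing $f\in\Lmuz$, I would show $\lim_{|v|\to\infty}\mu(v)|f(\phi(v))|=0$. Throughout I fix a constant $C>0$ with $\mu(w)\le C\mu(\phi(w))$ for all $w\in T$, coming from admissibility.

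The key step is the claim that $\lim_{|v|\to\infty,\ v\notin\phi^{-1}(\calS)}|\phi(v)|=\infty$. I would prove this by contradiction as in Proposition~\ref{pro_phi_minus_one_finite}: if it fails, there are $M_0>0$ and a sequence $\{v_n\}$ with $v_n\notin\phi^{-1}(\calS)$, $|v_n|\to\infty$, and $|\phi(v_n)|<M_0$ for all $n$; since $T$ is locally finite, the set $\{w\in T:|w|<M_0\}$ is finite, so there is a single $w$ with $|w|<M_0$ and $\phi(v_n)=w$ for infinitely many $n$. Then $\phi^{-1}(w)$ is infinite, so $w\in\calS$, contradicting $\phi(v_n)\notin\calS$. (This part does not use that $\calS$ is finite.)

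With the claim in hand, I would split $T$ according to whether $v\in\phi^{-1}(\calS)$ and estimate $\mu(v)|f(\phi(v))|$ in each piece. For $v\in\phi^{-1}(\calS)$, the value $\phi(v)$ lies in the finite set $\calS$, so $|f(\phi(v))|\le K:=\max_{w\in\calS}|f(w)|<\infty$ — this is exactly where finiteness of $\calS$ is used — and since $\mu(v)\to0$ along $\phi^{-1}(\calS)$ by hypothesis, there is $N_1$ with $\mu(v)|f(\phi(v))|\le K\mu(v)<\epsilon$ whenever $|v|\ge N_1$ and $v\in\phi^{-1}(\calS)$. For $v\notin\phi^{-1}(\calS)$, admissibility gives $\mu(v)|f(\phi(v))|\le C\mu(\phi(v))|f(\phi(v))|$; since $f\in\Lmuz$ there is $M$ with $\mu(w)|f(w)|<\epsilon/C$ for $|w|\ge M$, and by the claim there is $N_2$ with $|\phi(v)|\ge M$ whenever $|v|\ge N_2$ and $v\notin\phi^{-1}(\calS)$, so $\mu(v)|f(\phi(v))|<\epsilon$ there. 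Taking $N=\max\{N_1,N_2\}$ makes both bounds apply for all $v$ with $|v|\ge N$, which proves $f\circ\phi\in\Lmuz$; boundedness of $C_\phi$ on $\Lmuz$ then follows from its boundedness on $\Lmu$.

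I expect the claim of the second paragraph to be the main obstacle, together with recognizing that the two hypotheses on $\calS$ play complementary roles: the decay of $\mu$ along $\phi^{-1}(\calS)$ controls the part of $T$ where $\phi$ need not escape to infinity, while finiteness of $\calS$ both bounds $|f\circ\phi|$ there and (via local finiteness) forces $|\phi(v)|\to\infty$ off $\phi^{-1}(\calS)$. The remaining $\epsilon$-bookkeeping is routine and parallels the earlier proofs in the section.
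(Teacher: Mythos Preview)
Your proof is correct and follows the same strategy as the paper's: split according to whether $v\in\phi^{-1}(\calS)$, use the decay hypothesis on $\mu$ together with finiteness of $\calS$ on one piece, and use admissibility plus the claim $|\phi(v)|\to\infty$ off $\phi^{-1}(\calS)$ on the other. The paper additionally distinguishes the cases where $T\setminus\phi^{-1}(\calS)$ is finite or infinite before proving the claim, whereas your formulation absorbs both cases at once (the claim is vacuous when that complement is finite), which is a mild streamlining; one small remark: your closing sentence attributes the claim to finiteness of $\calS$, but as you correctly noted earlier, the claim uses only local finiteness of $T$, not finiteness of $\calS$.
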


\begin{proof}
Let $f \in \Lmuz$. Since $\phi$ is admissible, we only need to show
that $f \circ \phi$ is in $\Lmuz$. Let $m:=\max\{ |f(w)| \, : \, w \in
\calS \}$ and let $C>0$ such that $\mu(v) \leq C \mu(\phi(v))$ for all
$v \in T$.
	
Let $\epsilon>0$. Since
$$
\lim_{\substack{|v|\to \infty \\ v \in \phi^{-1}(\calS)}} \mu(v) = 0,
$$
there exists $N_1 \in \N$ such that
$$
\mu(v)  < \frac{\epsilon}{m+1}, \quad \text{ for all } v \in
\phi^{-1}(\calS) \text{ with } |v| \geq N_1.
$$
We consider the two cases when the complement of $\phi^{-1}(\calS)$ is finite or infinite.

Assume first $T\setminus \phi^{-1}(\calS)$ is finite. Let $N_2 \in \N$
  such that if $|v|\geq N_2$ then $v \notin T \setminus
  \phi^{-1}(\calS)$.
		
Define $N=\max\{N_1, N_2\}$. Then, if $|v|\geq N$, it follows that $v
\in \phi^{-1}(\calS)$. Hence,
$$
\mu(v) |f(\phi(v))| \leq \mu(v) m < \epsilon.
$$ Hence $f\circ \phi \in \Lmuz$.

Next, assume $T\setminus \phi^{-1}(\calS)$ is infinite. Since $f \in
  \Lmuz$, there exists $N_3 \in \N$ such that
$$
\mu(v) |f(v)| < \frac{\epsilon}{C}, \quad \text{ for all } v \in T
\text{ with } |v| \geq N_3.
$$
		
{\bf Claim:} There exists $N_4 \in \N$ such that for all $|v|\geq N_4$
with $v \notin \phi^{-1}(\calS)$ we have $|\phi(v)|\geq N_3$.
		
If not, there exists a sequence $\{ v_n \}$ of distinct points such
that $|v_n|\geq n$, such that $v_n \notin \phi^{-1}(\calS)$ and
$|\phi(v_n)| < N_3$ for all $n \in \N$. Passing to a subsequence if
necessary, it follows that there exists $w \in T$ with $|w| < N_3$ and
$\phi(v_n)=w$ for all $n \in \N$. But this implies that $w \in \calS$
and hence that $v_n \in \phi^{-1}(\calS)$ for all $n \in \N$,
contradicting the choice of the sequence and proving the claim.
		
Define $N=\max\{N_1, N_4\}$. Let $v \in T$ with $|v|\geq N$. If $v \in
\phi^{-1}(\calS)$, then
$$
\mu(v) |f(\phi(v))| \leq \mu(v) m < \epsilon.
$$
If $v \notin \phi^{-1}(\calS)$, then
$$
\mu(v) |f(\phi(v))| \leq C \mu(\phi(v)) |f(\phi(v))| < \epsilon,
$$
since $|\phi(v)|\geq N_3$. In either case,
$$
\mu(v) |f(\phi(v))| <\epsilon,
$$
and hence $f \circ \phi \in \Lmuz$.
\end{proof}

Proposition~\ref{scond_for_bddC_phi_onL0mu_v2} below generalizes the conditions of Propositions \ref{pro_mu_to_zero}, \ref{pro_phi_minus_one_finite} and
\ref{pro_phi_minus_one_infinite}. Indeed, in its statement we assume a condition that uses an arbitrary positive increasing function $g$ on the nonnegative integers. If $g$ is bounded, the result reduces to  Proposition~\ref{pro_mu_to_zero}. Moreover, in the proof of Proposition~\ref{pro_phi_minus_one_finite} we showed that $|\phi(v)|\to\infty$ as $|v|\to\infty$. Thus, letting $g$ be the identity, the result follows from  Proposition~\ref{scond_for_bddC_phi_onL0mu_v2}. The same argument applies to  Proposition~\ref{pro_phi_minus_one_infinite}, where the assumption combines elements of the previous two propositions. 

\begin{proposition}\label{scond_for_bddC_phi_onL0mu_v2}
Let $(T,\dist)$ be an unbounded, locally finite metric space with
distinguished element $o$, $g:\N_0\to \R_+$ an 
 increasing
function, $\mu$ a positive function on $T$ and $\phi$ an admissible
function. If
\begin{equation} 
\lim_{|v|\to\infty} \min\{\big(g(|\phi(v)|)\big)^{-1}\hskip-2pt, \mu(v) \} = 0,\label{bddC_phi_v2}
\end{equation}
then $C_\phi$ is bounded on $\Lmuz$.
\end{proposition}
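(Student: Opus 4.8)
The plan is to reduce, exactly as in the proofs of Propositions~\ref{pro_mu_to_zero}--\ref{pro_phi_minus_one_infinite}, to showing that $C_\phi$ maps $\Lmuz$ into itself. Since $\phi$ is admissible, $C_\phi$ is bounded on $\Lmu$ by Theorem~\ref{Theorem:Bounded Lmu}, and $\Lmuz$ is a closed subspace of $\Lmu$; hence once we know $f\circ\phi\in\Lmuz$ for every $f\in\Lmuz$, boundedness on $\Lmuz$ (with $\|C_\phi\|_{\Lmuz}\le\|C_\phi\|_{\Lmu}$) is automatic. So fix $f\in\Lmuz$ and $\epsilon>0$, and let $C>0$ be a constant with $\mu(v)\le C\mu(\phi(v))$ for all $v\in T$.

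First I would use $f\in\Lmuz$ to choose $R\in\N$ with $\mu(w)|f(w)|<\epsilon/C$ for all $|w|\ge R$. The ball $B:=\{w\in T:|w|<R\}$ is finite by local finiteness and carries all the remaining behaviour: set $M:=\max_{w\in B}|f(w)|$. Since $\{|w|:w\in B\}$ is a finite set of nonnegative reals and $g$ is increasing, the quantity $g(|\phi(v)|)$ stays bounded above as $v$ ranges over $\{v\in T:\phi(v)\in B\}$, so $\big(g(|\phi(v)|)\big)^{-1}\ge c$ on that set for some constant $c>0$. Now put $\delta:=\min\{c,\ \epsilon/(M+1)\}>0$ and invoke hypothesis~\eqref{bddC_phi_v2} to obtain $N\in\N$ such that $\min\{\big(g(|\phi(v)|)\big)^{-1},\mu(v)\}<\delta$ whenever $|v|\ge N$.

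The crux is then a two-case split for $|v|\ge N$. If $|\phi(v)|\ge R$, admissibility gives $\mu(v)|f(\phi(v))|\le C\mu(\phi(v))|f(\phi(v))|<C\cdot(\epsilon/C)=\epsilon$. If instead $\phi(v)\in B$, then $\big(g(|\phi(v)|)\big)^{-1}\ge c\ge\delta$, so the minimum in \eqref{bddC_phi_v2} cannot be attained by the first term; hence $\mu(v)<\delta\le\epsilon/(M+1)$, and therefore $\mu(v)|f(\phi(v))|\le M\mu(v)<\epsilon$. In both cases $\mu(v)|f(\phi(v))|<\epsilon$, so $\lim_{|v|\to\infty}\mu(v)|f(\phi(v))|=0$, i.e.\ $f\circ\phi\in\Lmuz$, which completes the argument.

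The only genuinely delicate point I anticipate is the treatment of $g$ on the finite ball $B$: in a general metric space $|\phi(v)|$ need not be an integer, so one should be careful to phrase the bound via the finiteness of $\{|w|:w\in B\}$ (reading $g$ as, e.g., $x\mapsto g(\lfloor x\rfloor)$ where needed) rather than via a single inequality $g(|\phi(v)|)\le g(R)$. Everything else is a routine $\epsilon$-chase, and the remark following the statement already indicates how Propositions~\ref{pro_mu_to_zero}, \ref{pro_phi_minus_one_finite} and \ref{pro_phi_minus_one_infinite} are recovered by taking $g$ bounded, $g$ the identity, or the combined choice.
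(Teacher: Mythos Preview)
Your argument is correct and follows essentially the same approach as the paper's proof: both fix a radius (your $R$, the paper's $N_1$) so that admissibility handles the case $|\phi(v)|\ge R$, and then use hypothesis~\eqref{bddC_phi_v2} to force $\mu(v)$ small when $|\phi(v)|<R$. The only cosmetic difference is that the paper organizes the case split by which term realizes the minimum (and uses the single threshold $1/g(N_1)$ in place of your $c$), whereas you split first on whether $\phi(v)\in B$; the resulting inequalities are identical, and your remark about $|\phi(v)|$ possibly being non-integral flags a genuine imprecision that the paper's own proof glosses over.
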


\begin{proof} Let $C>0$ such that $\mu(v) \leq C \mu(\phi(v))$ for all $v \in
T$. Let $f \in \Lmuz$ and $\epsilon >0$. There exists $N_1 \in \N$
such that
\begin{equation}\label{eq:def_f}
\mu (v) |f(v)| < \frac{\epsilon}{C}, \quad \text{ for all $v$ with } |v| \geq N_1.
\end{equation}
Let $m:=\max\{ |f(v)| \, : \,  |v| \leq N_1\}$. By hypothesis, there
exists $N_2 \in \N$ such that
$$
\min\{ \big(g(|\phi(v)|)\big)^{-1}, \mu(v) \}  < \min\left\{
  \frac{\epsilon}{m+1}, \frac{1}{g(N_1)} \right\} , \quad \text{ for
  all $v$ with } |v| \geq N_2.
$$	
Now, let $v$ be such that $|v| \geq \max\{ N_1, N_2\}$.
	
\begin{itemize}
\item Assume $\min\{ \big(g(|\phi(v)|)\big)^{-1}, \mu(v) \}  = \mu(v)$. 
\smallskip
		
If $|\phi(v)| \geq N_1$, then, by the admissibility of $\phi$ and equation \eqref{eq:def_f}, we have
$$
\mu(v) |f(\phi(v))| \leq C \mu(\phi(v)) |f(\phi(v))| < \epsilon.
$$
If $|\phi(v)| \leq N_1$, then
$$
\mu(v) |f(\phi(v))| \leq \mu(v) m  < \epsilon,
$$
by the definition of $m$ and  since $\min\{
\big(g(|\phi(v)|)\big)^{-1}, \mu(v) \} = \mu(v)  < \dfrac{\epsilon}{m+1}$.

\item Assume $\min\{\big(g(|\phi(v)|)\big)^{-1}, \mu(v) \}  =
\big(g(|\phi(v)|)\big)^{-1}$. Then, $\frac1{g(|\phi(v)|)} <
\frac{1}{g(N_1)}$ and hence $|\phi(v)| > N_1$. Therefore,
$$
\mu(v) |f(\phi(v))| \leq C \mu(\phi(v)) |f(\phi(v))| < \epsilon,
$$
as before.
\end{itemize}
In either case, $\mu(v) |f(\phi(v))| < \epsilon$ and hence $f \circ \phi \in \Lmuz$.
\end{proof}

In the following example, we cannot apply Propositions
\ref{pro_mu_to_zero}, \ref{pro_phi_minus_one_finite}, or
\ref{pro_phi_minus_one_infinite}, but
Proposition~\ref{scond_for_bddC_phi_onL0mu_v2} yields the boundedness
of the operator.

\begin{example}
Consider the set $T=\N_0\times \N_0$ with the metric $\dist$ given by
$$\dist((m,n),(m',n'))=|m-m'|+|n-n'|$$ and take as a distinguished element the
pair $(0,0)$. Define $\mu$ on $T$ by $\mu(m,n)=3^m 2^{-n}$. The function
$\phi$ given by $\phi(m,n)=(m,0)$ is easily seen to be admissible. Since
$$
\lim_{|m|+|n|\to \infty} \min \{ (|m|+1)^{-1}, 3^m2^{-n}\} =0,
$$
condition (\ref{bddC_phi_v2}) holds for $g(x)=x+1$, $x \in \N_0$.  
Hence, $C_\phi$ is a bounded operator on $\Lmuz$.
\end{example}

Note that Proposition~\ref{scond_for_bddC_phi_onL0mu_v2} continues to
hold if we assume $g$ to be eventually increasing unboundedly.  We now
show that a converse of Proposition~\ref{scond_for_bddC_phi_onL0mu_v2}
holds under a restriction on the growth of the weight $\mu$. In
particular, we obtain a full characterization of boundedness for
$C_\phi$ acting on $L^0_\mu(T)$ if the weight $\mu$ is bounded.

\begin{theorem}\label{th:conv_for_bddC_phi_onL0mu_v2}	
Let $(T,\dist)$ be an unbounded, locally finite metric space with a
distinguished element $\o$, $g:\N_0\to \R_+$ an arbitrary 
 increasing
function, $\mu$ be a positive function on $T$ such that
$\mu(v)=o(g(|v|))$ as $|v|\to\infty$, and $\phi$ a self-map of
$T$. Then $C_\phi$ is bounded on $\Lmuz$ if and only if $\phi$ is
admissible and condition (\ref{bddC_phi_v2}) holds.
\end{theorem}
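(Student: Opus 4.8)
The plan is to obtain one direction essentially for free and to concentrate all the work on the other. For the ``if'' direction there is nothing to do: assuming $\phi$ admissible and \eqref{bddC_phi_v2}, the boundedness of $C_\phi$ on $\Lmuz$ is precisely Proposition~\ref{scond_for_bddC_phi_onL0mu_v2}, which imposed no condition on the size of $\mu$. So the entire argument is for the ``only if'' direction, and this is where the hypothesis $\mu(v)=o(g(|v|))$ must enter.

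Assume $C_\phi$ is bounded on $\Lmuz$. That $\phi$ is admissible I would simply quote from Proposition~\ref{Proposition:necessary} (the test function $g_w$ used there belongs to $\Lmuz$); this also guarantees $f\circ\phi\in\Lmu$ for every $f\in\Lmu$, so $C_\phi$ is a genuine operator. The substance is to prove \eqref{bddC_phi_v2}, and I would argue by contradiction. If \eqref{bddC_phi_v2} fails, there are $\epsilon>0$ and a sequence $\{v_n\}$ in $T$ with $|v_n|\to\infty$, $\mu(v_n)\ge\epsilon$, and $g(|\phi(v_n)|)\le 1/\epsilon$ for all $n$. The goal is then to produce a single function in $\Lmuz$ whose image under $C_\phi$ fails to lie in $\Lmuz$.

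The crux, and the step I expect to be the main obstacle, is converting ``$g(|\phi(v_n)|)$ bounded'' into ``$|\phi(v_n)|$ bounded'': a priori $g$ could be, say, eventually constant, in which case no bound on $|\phi(v_n)|$ follows from the second inequality alone. Here is where $\mu(v)=o(g(|v|))$ is used: combined with $\mu(v_n)\ge\epsilon$ it forces $g(|v_n|)\to\infty$, so some $g(|v_{n_0}|)$ exceeds $1/\epsilon$; setting $N_0:=|v_{n_0}|$ and using that $g$ is increasing, $g(|\phi(v_n)|)\le 1/\epsilon<g(N_0)$ yields $|\phi(v_n)|<N_0$ for all $n$. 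Local finiteness then makes $\{w\in T:|w|<N_0\}$ a finite set, so by the pigeonhole principle I may pass to a subsequence on which $\phi(v_n)\equiv w$ for a fixed vertex $w$.

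Finishing is routine: take $f=\chi_{\{w\}}\in\Lmuz$; then $C_\phi f=\chi_{\phi^{-1}(w)}$, and along the subsequence $v_n\in\phi^{-1}(w)$, so $\mu(v_n)\,|(C_\phi f)(v_n)|=\mu(v_n)\ge\epsilon$ while $|v_n|\to\infty$. Hence $C_\phi f\notin\Lmuz$, contradicting that $C_\phi$ maps $\Lmuz$ into itself. Therefore \eqref{bddC_phi_v2} holds, and together with admissibility this proves the theorem. Note that the argument uses only the monotonicity of $g$ and the growth restriction on $\mu$; in particular it shows why, without such a restriction, the ball containing the points $\phi(v_n)$ could be infinite and the characteristic-function test would break down.
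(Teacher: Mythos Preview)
Your proof is correct, but the contrapositive step takes a genuinely different route from the paper's. The paper does not try to bound $|\phi(v_n)|$ at all: it simply defines the single global test function
\[
f(v)=\begin{cases} \dfrac{1}{g(|v|)}, & v\neq \o,\\[1ex] 0, & v=\o,\end{cases}
\]
observes that $\mu(v)=o(g(|v|))$ places $f$ in $\Lmuz$, and then reads off
\[
\mu(v_n)\,|f(\phi(v_n))|=\mu(v_n)\,\big(g(|\phi(v_n)|)\big)^{-1}\ge \delta^{2},
\]
directly from the failure of \eqref{bddC_phi_v2}. (The paper also treats the case of bounded $g$ separately, which your argument absorbs automatically.)

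Your approach instead combines $\mu(v_n)\ge\epsilon$ with $\mu=o(g)$ to force $g(|v_n|)\to\infty$, uses monotonicity of $g$ to trap $|\phi(v_n)|$ in a fixed ball, and then invokes local finiteness and pigeonhole to land on a single vertex $w$, testing with $\chi_{\{w\}}$. This is a bit longer but perfectly valid; it has the virtue of making explicit the link to Proposition~\ref{prop:finiterange} (finite-range symbols) and of showing exactly where local finiteness enters. The paper's proof, by contrast, is a one-line computation once the test function is written down and does not use local finiteness or any subsequence extraction at this step.
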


\begin{proof} If $g$ is bounded, the result is trivial, since the assumption $\mu=o(g)$ yields $\mu(v)\to 0$ as $|v|\to \infty$, and the conclusion follows at once applying Propositions~\ref{pro_mu_to_zero} and {Proposition:necessary}. 

Thus, let us  assume $g$ is unbounded. By Propositions~\ref{Proposition:necessary} and
  \ref{scond_for_bddC_phi_onL0mu_v2}, it suffices to show that if
  condition (\ref{bddC_phi_v2}) fails, then $C_\phi$ is not bounded on
  $\Lmuz$. Assume (\ref{bddC_phi_v2}) fails. Then there exist a
  sequence of vertices $\{v_n\}$ with $|v_n|\to \infty$ as
  $n\to\infty$, a positive constant $\delta$ and a positive integer
  $N$ such that $\min\{\big(g(|\phi(v_n)|)\big)^{-1},\mu(v_n)\}\ge
  \delta$ whenever $|v_n|\ge N$. Define
$$
f(v)=\begin{cases} \frac1{g(|v|)}, &\quad\text{ if }v\ne \o,\\
  0, &\quad\text{ if }v= \o.\end{cases}
$$
On the one hand, since $\mu(v)=o(g(|v|))$ as $|v|\to\infty$, it
follows that $f\in L^0_\mu(T)$. On the other hand, for all $n\in \N$
such that $|v_n|\ge N$,
$$
\mu(v_n)|f(\phi(v_n))|=\mu(v_n)\big(g(|\phi(v_n)|)\big)^{-1}\ge
\delta^2.
$$
Therefore, $C_\phi f\notin L^0_\mu(T)$.

\end{proof}

Are there conditions on the symbol $\phi$ and on the weight $\mu$ that
are both sufficient and necessary for boundedness of $C_\phi$ on
$\Lmuz$? We have not been able to find an answer to this problem so we
leave it open for future research.

\section{Hypercyclicity}\label{Section:Hypercyclic}

Let $X$ be a Banach space. Recall that an operator $S:X \to X$ is
called {\em hypercyclic} if there exists $x \in X$ such that $\{ x,
Sx, S^2 x, S^3x, \dots\}$ is dense in $X$. Such a vector $x$ is also
called hypercyclic. Clearly, the existence of a hypercyclic operator
on a Banach space $X$ implies that $X$ is separable. Observe that the
set of hypercyclic vectors is a dense set in $X$, since every vector
in the orbit $\{ x, Sx, S^2 x, S^3x, \dots\}$ is itself hypercyclic.

There has been much research on hypercyclicity and the reader is
referred to the books \cite{BaMa,GrErPeris} for the necessary
background on the study of hypercyclicity. In what follows, we
concentrate on the problem of determining hypercyclicity of
composition operators on $\Lmuz$. This study somewhat follows the
classical studies of hypercyclicity of composition operators on spaces
of analytic functions (see \cite{CoMA} for a review of hypercyclicity
of composition operators on several Banach spaces of analytic
functions on the disk).

As it is usually the case for composition operators, if $\phi: T \to
T$ has a fixed point, then $C_\phi$ cannot be hypercyclic on $\Lmuz$,
as we now show. A more general result can be obtained: a self-map of
$T$ with periodic points cannot induce a hypercyclic composition
operator on a functional Banach space.

\begin{theorem}
Let $X$ be a functional Banach space of complex-valued functions
defined on a set $T$. Let $\phi$ be a self-map of $T$ and assume that
$C_\phi$ is bounded on $X$. If $C_\phi$ is hypercyclic on $X$, then
$\phi$ does not have periodic points.
\end{theorem}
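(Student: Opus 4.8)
The plan is to argue by contradiction, producing a bounded linear functional along which every $C_\phi$-orbit takes only finitely many values, which is incompatible with density.

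Suppose $\phi$ has a periodic point $p\in T$, say $\phi^k(p)=p$ for some integer $k\ge 1$. For every $f\in X$ and every $n\ge 0$ we have $(C_\phi^n f)(p)=f(\phi^n(p))$, and as $n$ ranges over $\N_0$ the point $\phi^n(p)$ stays in the finite set $O:=\{p,\phi(p),\dots,\phi^{k-1}(p)\}$. Hence the set $\{(C_\phi^n f)(p):n\ge 0\}$ is contained in $\{f(q):q\in O\}$, so it is finite (with at most $k$ elements), regardless of which $f\in X$ we choose.

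Now assume, for contradiction, that $f\in X$ is hypercyclic for $C_\phi$, i.e. $\{C_\phi^n f:n\ge 0\}$ is dense in $X$. Since $X$ is a functional Banach space, the point evaluation $e_p$ is a bounded (hence continuous) linear functional on $X$ that is not identically zero. A continuous map sends dense subsets onto dense subsets of the image, so
$$
\{e_p(C_\phi^n f):n\ge 0\}=\{(C_\phi^n f)(p):n\ge 0\}
$$
is dense in $e_p(X)$. But $e_p(X)$ is a nonzero linear subspace of $\C$ and therefore equals $\C$, while the set on the left is finite; a finite subset of $\C$ is not dense in $\C$. This contradiction shows that $C_\phi$ has no hypercyclic vector, which is the desired conclusion.

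I do not expect a genuine obstacle here; the argument only uses the defining features of a functional Banach space (boundedness of $e_p$ for continuity, and $e_p\not\equiv 0$) together with the elementary fact that the range of a nonzero linear functional on a complex vector space is all of $\C$. If one prefers not to invoke surjectivity, one can instead note that $e_p(X)$ contains $0$ and some nonzero scalar, hence (being a subspace) is infinite, so again no finite set can be dense in it. The case of a fixed point discussed in the text is simply $k=1$, where $O=\{p\}$.
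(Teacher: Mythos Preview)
Your argument is correct and is essentially the same as the paper's: both assume a periodic point, use continuity and surjectivity of the associated evaluation functional to conclude that the images of an orbit under $e_p$ must be dense in $\C$, and then observe that this set is actually finite because of periodicity. The only differences are cosmetic (your extra justification that a nonzero linear functional onto $\C$ is surjective, and the closing remark about avoiding surjectivity).
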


\begin{proof}
Suppose that there exists $w \in T$ and $p \in \N$ such that
$\phi^p(w)=w$. Let $f \in X$ be a hypercyclic vector. Then the
set
$$\{  f, f \circ \phi, f \circ \phi^2, f \circ \phi^3, \dots  \}
$$
is dense in $X$. Since the evaluation functional $e_w$ is continuous
and surjective, it follows that
$$
\{  f(w), f (\phi(w)), f (\phi^2(w)),  f(\phi^3(w)), \dots  \}
$$
is dense in $\C$. Since $\phi^p(w)=w$ this set equals
$$
\{  f(w), f(\phi(w)), f(\phi^2(w)), \dots,  f(\phi^{p-1}(w)) \}.
$$
But this set cannot be dense in $\C$.
\end{proof}

It follows that if the self-map $\phi$ of  $T$ has a fixed point (or a
periodic point) and $C_\phi$ is bounded, then $C_\phi$ is not
hypercyclic on the functional Banach space $\Lmuz$.

The following definition was introduced by Bernal-Gonz\'alez and
Montes-Rodr\'iguez in a different, but similar, context.

\begin{defi}
Let $(X,d)$ be a metric space and let $\phi$ be a self-map of $X$. We
say that $\phi$ is a \textbf{run-away} function if for every finite set
$I$, there exists $N \in \N$ such that $\phi^n(I) \cap I = \emptyset$
for every $n \geq N$.
\end{defi}

\begin{proposition}
Let $(X,d)$ be a metric space and let $\phi$ be a self-map of $X$. If
$\phi$ does not have periodic points, then $\phi$ is a run-away
function.
\end{proposition}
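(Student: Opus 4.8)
The plan is to argue by contraposition: assuming $\phi$ is not a run-away function, I would manufacture a periodic point. Negating the run-away property, there is a finite set $I\subseteq X$ such that for every $N\in\N$ there exists $n\ge N$ with $\phi^n(I)\cap I\neq\emptyset$; equivalently, there is a strictly increasing sequence of positive integers $n_1<n_2<\cdots$ with $\phi^{n_k}(I)\cap I\neq\emptyset$ for every $k$. For each $k$ I would pick $x_k\in I$ with $y_k:=\phi^{n_k}(x_k)\in I$, recording a pair $(x_k,y_k)\in I\times I$.

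The one substantive step is a pigeonhole argument: since $I\times I$ is finite, some pair $(x,y)\in I\times I$ equals $(x_k,y_k)$ for infinitely many $k$. Passing to that subsequence (still strictly increasing, still tending to infinity, and relabelled $n_1<n_2<\cdots$), I get $\phi^{n_k}(x)=y$ for all $k$. Then, writing $p:=n_2-n_1\ge 1$,
\[
\phi^{p}(y)=\phi^{n_2-n_1}\big(\phi^{n_1}(x)\big)=\phi^{n_2}(x)=y,
\]
so $y$ is a periodic point of $\phi$, contradicting the hypothesis. Note that injectivity of $\phi$ is never needed, nor is the metric on $X$ used at all — the statement is purely combinatorial dynamics, and the metric is relevant only because "finite set" is the natural notion inherited from the ambient space.

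I expect the pigeonhole reduction to a single pair $(x,y)$ to be the step carrying all the weight; the rest is index bookkeeping. The small point to watch is that the subsequence must remain strictly increasing so that $p=n_2-n_1$ is genuinely a positive integer (ensuring $y$ is periodic in the usual sense $\phi^p(y)=y$ with $p\ge 1$), which is automatic since we extract the subsequence from an already strictly increasing sequence.
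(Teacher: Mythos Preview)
Your proof is correct and follows essentially the same contrapositive-plus-pigeonhole route as the paper; the only cosmetic difference is that you pigeonhole once on the finite set $I\times I$ to fix both source and target simultaneously, whereas the paper does two sequential pigeonhole passes (first fixing the target $w\in I$, then the source $v\in I$). The periodic-point conclusion $\phi^{n_2-n_1}(y)=y$ is identical to the paper's $\phi^{s-t}(w)=w$.
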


\begin{proof}
Assume that $\phi$ is not a run-away function. Therefore, there exists
a finite set $I$ and an increasing sequence $\{n_k\}$ of natural
numbers such that
$$
\phi^{n_k}(I) \cap I \neq \emptyset
$$
for all $k$. Since $I$ is finite, there exists $w \in I$ such that
$$
w \in \phi^{n_k}(I)
$$
for infinitely many natural numbers $k$. Now, since $I$ is finite, it
also follows that there exists $v \in I$ such that
$$
w = \phi^{n_k}(v)
$$
for infinitely many natural numbers $k$. Assume then that
$w=\phi^s(v)$ and $w=\phi^t(v)$, for integers $s$ and $t$ with
$s>t$. But then
$$
\phi^{s-t}(w)= \phi^{s-t}(\phi^t(v))=\phi^s(v)=w,
$$
which implies that $w$ is a periodic point, finishing the proof.
\end{proof}

Let $X$ be a functional Banach space over a set $T$ and assume
$C_\phi$ is a composition operator on $X$. Then the adjoint of the
operator $C_\phi$ maps point-evaluation functionals to
point-evaluation functionals. Specifically, for each $h\in X$ and each
$v\in T$, we have
$$
(C_\phi^* e_v)(h)= e_v (C_\phi h)=e_v (h\circ \phi)= h(\phi(v)) = e_{\phi(v)} (h),
$$
and hence $C_\phi^* e_v = e_{\phi(v)}$ for each $v\in T$.

\begin{theorem}
Let $X$ be a functional Banach space of complex-valued functions
defined on a set $T$. Let $\phi$ be a self-map of $T$ and assume that
$C_\phi$ is bounded on $X$. If $\phi$ is not injective, then $C_\phi$
is not hypercyclic on $X$.
\end{theorem}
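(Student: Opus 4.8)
The plan is to use the relation $C_\phi^* e_v = e_{\phi(v)}$ recorded just before the theorem: non-injectivity of $\phi$ produces a nonzero eigenvector for $C_\phi^*$, and an operator whose adjoint has an eigenvector cannot be hypercyclic. Concretely, here is how I would run the argument.

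First I would fix distinct points $v,w\in T$ with $\phi(v)=\phi(w)$ (possible since $\phi$ is not injective), and set $\Lambda:=e_v-e_w$. This is a bounded linear functional on $X$, being a difference of point evaluations, which are bounded because $X$ is a functional Banach space; and it is nonzero, because the point evaluations separate points, so some $h\in X$ satisfies $h(v)\neq h(w)$, i.e.\ $\Lambda(h)\neq 0$. Using $C_\phi^* e_u=e_{\phi(u)}$ gives
$$
C_\phi^*\Lambda = e_{\phi(v)}-e_{\phi(w)} = 0,
$$
so $\Lambda$ is an eigenvector of $C_\phi^*$ with eigenvalue $0$. Iterating (equivalently, by the induction $\phi^n(v)=\phi^n(w)$ for all $n\ge 1$, obtained by applying $\phi^{n-1}$ to $\phi(v)=\phi(w)$) yields $\Lambda(C_\phi^n g)=0$ for every $g\in X$ and every $n\ge 1$.

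Next, I would suppose for contradiction that $C_\phi$ is hypercyclic, with hypercyclic vector $f$, so $\{f,C_\phi f, C_\phi^2 f,\dots\}$ is dense in $X$. Since $\Lambda$ is continuous and $\Lambda(X)=\C$ (as $\Lambda\neq 0$), the image $\{\Lambda(C_\phi^n f):n\ge 0\}$ must be dense in $\C$. But by the previous step this image is contained in $\{\Lambda(f),0\}$, a set with at most two elements, which cannot be dense in $\C$; this is the desired contradiction.

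The argument is short, and I do not expect a genuine obstacle. The two points that need a little care are the verification that $\Lambda\neq 0$, which is exactly where the ``separates points'' axiom of a functional Banach space is used, and the bookkeeping that $\Lambda(C_\phi^n g)=0$ holds only for $n\ge 1$; the latter is harmless since discarding the single term $n=0$ still leaves the orbit's image inside $\{\Lambda(f),0\}$.
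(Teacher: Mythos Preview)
Your proof is correct and follows essentially the same approach as the paper: both identify the nonzero functional $\Lambda=e_v-e_w$ in the kernel of $C_\phi^*$. The paper concludes by noting that a nontrivial kernel for $C_\phi^*$ forces the range of $C_\phi$ to be non-dense (hence not hypercyclic), whereas you unwind this directly via the orbit; your explicit verification that $\Lambda\neq 0$ using the separation-of-points axiom is a detail the paper leaves implicit.
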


\begin{proof}
 Assume $\phi(v)=\phi(w)$ for two distinct
vertices $v$ and $w$.  Let $e_v$ and $e_w$ be the evaluation
functionals for $v$ and $w$ respectively. Then $C_\phi^*(e_v-e_w)=0$
and hence $C_\phi^*$ has nontrivial kernel. This implies that the
range of $C_\phi$ cannot be dense and hence $C_\phi$ cannot be
hypercyclic.
\end{proof}

We should point out that if $\mu(v) \leq \mu(\phi(v))$ for all $v \in
T$ then $\| C_\phi \| \leq 1$, which implies that $C_\phi$ cannot be hypercyclic. 

The following theorem gives another condition which prevents hypercyclicity.

\begin{theorem}
Let $(T,\dist)$ be an unbounded, locally finite metric space with a
distinguished element $\o$, let $\mu$ be a positive function on $T$
and let $\phi$ be a self-map of $T$. Assume that $C_\phi$ is bounded
on $\Lmuz$. If $\phi$ is surjective and $\mu(\phi(v)) \leq \mu(v)$ for
every $v \in T$, then $C_\phi$ is not hypercyclic.
\end{theorem}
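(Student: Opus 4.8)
The plan is to show that under these hypotheses $C_\phi$ has norm at most $1$, which immediately rules out hypercyclicity, since a hypercyclic operator must have an orbit that is unbounded (indeed dense), and no orbit of a contraction can be dense in an infinite-dimensional space. First I would recall from Theorem~\ref{Theorem:Bounded Lmu} that $\|C_\phi\|_{\Lmu} = \sup_{v\in T} \mu(v)/\mu(\phi(v))$, and the hypothesis $\mu(\phi(v)) \le \mu(v)$ for all $v$ would then appear to give $\mu(v)/\mu(\phi(v)) \ge 1$, which is the \emph{wrong} inequality. So the naive norm estimate does not work directly, and the surjectivity hypothesis must be doing real work. This is the first place to be careful.

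Instead, I would exploit surjectivity together with the contraction-of-weights condition to compare $\|C_\phi f\|_\mu$ with $\|f\|_\mu$ from the other side. Given $f \in \Lmuz$, write $\|C_\phi f\|_\mu = \sup_{v\in T}\mu(v)\,|f(\phi(v))|$. Since $\mu(\phi(v)) \le \mu(v)$ gives nothing useful, the key observation should rather be on the adjoint side: $C_\phi^* e_v = e_{\phi(v)}$, and by Lemma~\ref{Lemma:boundedevaluation} one has $\|e_v\| \le 1/\mu(v)$. The condition $\mu(\phi(v)) \le \mu(v)$ says $1/\mu(\phi(v)) \ge 1/\mu(v)$, so $\|e_{\phi(v)}\|$ can be larger than $\|e_v\|$; again the inequality runs the wrong way for a direct norm bound. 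The resolution is surjectivity: every $w \in T$ equals $\phi(v)$ for some $v$, and I would use this to show that the orbit closure $\overline{\{f, f\circ\phi, f\circ\phi^2,\dots\}}$ is forced to lie in a proper closed subset of $\Lmuz$, contradicting density. Concretely, I expect the argument to run: fix any $v_0$ with $|v_0|$ large; because $\mu$ decreases along $\phi$-orbits, the quantity $\mu(\phi^n(v_0))\,|f(\phi^n(v_0))|$ is controlled, and surjectivity lets us realize every vertex as $\phi^n$ of something, pinning down values of the iterates $f\circ\phi^n$ at prescribed points in a way incompatible with approximating an arbitrary target function.

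The cleanest route, and the one I would ultimately pursue, is the following. Surjectivity of $\phi$ implies surjectivity of each $\phi^n$, hence for every $w\in T$ and every $n$ there is $v$ with $\phi^n(v)=w$; combined with $\mu(\phi(v))\le\mu(v)$ applied iteratively, $\mu(w)=\mu(\phi^n(v))\le\mu(v)$. Now suppose $f$ is hypercyclic and pick a target $h\in\Lmuz$ with $\|h\|_\mu$ arbitrarily large but with $\|h\|_\mu$ attained (or nearly attained) at a controlled vertex; the point is to compare $\mu(w)|(f\circ\phi^n)(w)|$ against $\|f\|_\mu$ using that any such $w$ pulls back under $\phi^n$ to a vertex of weight $\ge\mu(w)$. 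One gets $\|C_\phi^n f\|_\mu = \sup_w \mu(w)|f(\phi^n(w))| \le \sup_w \mu(\phi^n(w))|f(\phi^n(w))| \le \|f\|_\mu$, where the first inequality is exactly $\mu(w)\le\mu(\phi^n(w))$—wait, that needs $\mu$ to \emph{increase} along orbits. Since the hypothesis is the opposite, the correct manipulation uses the pullback: for the vertex $w$ where the sup for $C_\phi^n f$ is nearly attained, choose $u$ with $\phi^n(u)=w$, so $\mu(u)|f(\phi^n(u))|=\mu(u)|f(w)|\ge\mu(w)|f(w)|$, showing $\|f\|_\mu\ge\mu(w)|f(w)|$ whenever $w\in\mathrm{ran}(\phi^n)=T$—but $|f(w)|$ here is $|f(\phi^n(u))|$, not what appears in $\|C_\phi^n f\|$. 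The main obstacle, and the step requiring genuine care, is disentangling these two uses of the variable so as to produce a clean bound $\|C_\phi^n f\|_\mu \le \|f\|_\mu$ (equivalently $\|C_\phi\|\le 1$ on the relevant subspace, or at least uniform boundedness of the powers); once that is in hand, hypercyclicity is impossible because a dense orbit cannot be norm-bounded. I would present the final version once the bookkeeping on which vertex indexes which supremum is settled, as that is where the subtlety—and the essential role of surjectivity—lives.
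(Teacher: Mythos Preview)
Your proposal has a genuine gap: you are chasing the wrong inequality. Throughout the last paragraph you aim to establish $\|C_\phi^n f\|_\mu \le \|f\|_\mu$, i.e.\ that $C_\phi$ is a contraction, but as you yourself noticed the hypothesis $\mu(\phi(v))\le\mu(v)$ gives $\mu(v)/\mu(\phi(v))\ge 1$, so $\|C_\phi\|\ge 1$ and there is no reason for the powers to be bounded. Your ``pullback'' computation is in fact the right move, but it proves a \emph{lower} bound, and you mis-identified which supremum it feeds into.

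Here is how the bookkeeping actually untangles, and this is exactly the paper's argument. Fix $f\in\Lmuz$ and an arbitrary $w\in T$. By surjectivity choose $v$ with $\phi(v)=w$. Then
\[
\|C_\phi f\|_\mu \;\ge\; \mu(v)\,|f(\phi(v))| \;=\; \mu(v)\,|f(w)| \;\ge\; \mu(\phi(v))\,|f(w)| \;=\; \mu(w)\,|f(w)|,
\]
the last inequality being precisely $\mu(\phi(v))\le\mu(v)$. Since $w$ was arbitrary, taking the supremum over $w$ gives $\|C_\phi f\|_\mu\ge\|f\|_\mu$. Iterating, $\|C_\phi^n f\|_\mu\ge\|f\|_\mu$ for every $n$, so no nonzero orbit can come near $0$, and $C_\phi$ is not hypercyclic. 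The point is that surjectivity lets you realize every $w$ as $\phi(v)$ and thereby bound $\|C_\phi f\|_\mu$ from \emph{below} by each term $\mu(w)|f(w)|$; you were trying to use it to bound $\|C_\phi f\|_\mu$ from above, which it cannot do.
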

\begin{proof}
Observe that, for $f \in \Lmuz$ and for each $v \in T$ we have
$$
\mu(v) | (f \circ \phi) (v)| \geq \mu(\phi(v)) |f(\phi(v))|
$$
and hence
$$
\| C_\phi f \|_\mu \geq  \mu(\phi(v)) |f(\phi(v))|.
$$
Since $\phi$ is surjective, this implies that $\| C_\phi f \|_\mu
\geq \| f \|_\mu$ and hence that $\| C^n_\phi f \|_\mu \geq \| f
\|_\mu$ for each $n \in \N$. It follows that $C_\phi$ cannot be hypercyclic.
\end{proof}

Let $n \in \N$ and let $\phi$ be a self-map of $T$. We denote by $T^n$
the image of $\phi^n$; that is,
\[
T^n:=\{ w \in T \, : \, w=\phi^n(v) \text{ for some } v \in T \}.
\]
We also denote by $T^\infty$ the set
\[
T^\infty:=\{ w \in T \, : \, \text{ for each } n\in \N \text{ there
	exists } v  \in T \text{ with } \phi^n(v)=w \}.
\]
Clearly, $\ds T^\infty=\bigcap_{n=1}^\infty T^n$.

For the proof of our next theorem, we will need the so-called  {\em Hypercyclicity
  Criterion}, which was obtained independently by Kitai in 1982 and
by Gethner and Shapiro, in 1987. A modern proof can be found in \cite[Theorem 3.12]{GrErPeris}.

\begin{theorem*}[Hypercyclicity Criterion]
Let $\calL$ be a separable Banach space and $B$ a bounded operator on
$\calL$. Assume that there exists a dense subset $X$ of $\calL$, a
sequence $\{ n_k \}$ of increasing positive integers, and a sequence
of functions $\{ S_{n_k} \}$, with $S_{n_k} : X \to \calL$ such that, for
every $x \in X$,
\begin{enumerate}
\item $B^{n_k} x \to 0$,
\item $S_{n_k} x \to 0$, and
\item $B^{n_k} S_{n_k} x \to x$.
\end{enumerate}
Then, $B$ is hypercyclic.
\end{theorem*}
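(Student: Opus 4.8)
The plan is to derive the criterion from Birkhoff's transitivity theorem: on a separable Banach space $\calL$, a bounded operator $B$ is hypercyclic if and only if it is \emph{topologically transitive}, meaning that for every pair of nonempty open sets $U,V\subseteq\calL$ there exists $n\in\N$ with $B^n(U)\cap V\neq\emptyset$. (The nontrivial direction of Birkhoff's theorem is a Baire category argument: fixing a countable dense set $\{y_j\}$, the hypercyclic vectors form the $G_\delta$ set $\bigcap_{j}\bigcup_{n}B^{-n}\big(\{x:\|x-y_j\|<1/j\}\big)$, and transitivity makes each of these open sets dense.) So the real task is to check that hypotheses (1)--(3) force transitivity.

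First I would fix nonempty open sets $U,V\subseteq\calL$. Using the density of $X$, choose $u\in U\cap X$ and $v\in V\cap X$, and pick $\epsilon>0$ so that the ball of radius $\epsilon$ about $u$ is contained in $U$ and the ball of radius $\epsilon$ about $v$ is contained in $V$. The test vector to use is
$$
z_k:=u+S_{n_k}v\in\calL.
$$
By hypothesis (2), $S_{n_k}v\to0$, so $z_k\to u$ and hence $z_k\in U$ for all large $k$. Since $B^{n_k}$ is bounded and linear,
$$
B^{n_k}z_k=B^{n_k}u+B^{n_k}S_{n_k}v,
$$
and here hypothesis (1) makes the first term tend to $0$ while hypothesis (3) makes the second tend to $v$; thus $B^{n_k}z_k\to v$ and so $B^{n_k}z_k\in V$ for all large $k$. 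Picking any $k$ large enough that both inclusions hold gives $z_k\in U$ with $B^{n_k}(z_k)\in V$, so $B^{n_k}(U)\cap V\neq\emptyset$; in particular some iterate of $B$ maps a point of $U$ into $V$. As $U,V$ were arbitrary, $B$ is topologically transitive, and Birkhoff's theorem finishes the proof (in fact yielding a dense $G_\delta$ of hypercyclic vectors).

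I do not expect a serious obstacle here: once the test vector $z_k=u+S_{n_k}v$ is written down, the verification is just continuity and linearity of $B^{n_k}$. The one point worth flagging is that (1)--(3) only provide a subsequence $\{n_k\}$ rather than the full orbit; but this is harmless, since exhibiting a single index $n=n_k$ with $B^n(U)\cap V\neq\emptyset$ already witnesses transitivity. If one wanted to avoid invoking Birkhoff (and hence Baire category) altogether, the alternative is to construct a hypercyclic vector by hand as a rapidly convergent telescoping series built from the $S_{n_k}$ applied to an enumeration of a countable dense set, passing to a sufficiently sparse subsequence of $\{n_k\}$ at each stage so that earlier approximations are not spoiled; keeping the tails of that series under control is the genuinely technical part of that approach, and it is the reason the Baire-category route is preferable.
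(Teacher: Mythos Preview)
Your proof is correct and is precisely the standard argument via Birkhoff's transitivity theorem. However, note that the paper does not actually prove this statement: it is quoted as a known result (the Kitai/Gethner--Shapiro criterion) and the reader is referred to \cite[Theorem~3.12]{GrErPeris} for a proof. Your write-up is essentially that textbook proof---reduce to topological transitivity, pick $u\in U\cap X$, $v\in V\cap X$, and use the test vector $z_k=u+S_{n_k}v$---so there is nothing to compare beyond observing that you have supplied what the paper chose to outsource.
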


While the following three theorems can be obtained as corollaries to
\cite[Theorem 3]{GrEr}, we include their proofs, which were,
partially, obtained independently, and are specific to composition
operators on $\Lmuz$.

\begin{theorem}\label{t5.8}
	Let $(T,\dist)$ be an unbounded, locally finite metric space with a
	distinguished element $\o$, let $\mu$ be a positive function on $T$
	and let $\phi$ be a self-map of $T$. Assume that $C_\phi$ is bounded
	on $\Lmuz$ and $\phi$ is injective. Then, $C_\phi$ is hypercyclic if
	and only if there exists an increasing sequence $\{n_k\}$ of positive
	integers such that
	\[
	\mu(\phi^{n_k}(v)) \to 0 \quad \text{ for all } v \in T,
	\]
	and
	\[
	\mu(\phi^{-n_k}(v)) \to 0 \quad \text{ for all } v \in T^\infty.
	\]
\end{theorem}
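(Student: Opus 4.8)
The plan is to establish the two implications separately, using the Hypercyclicity Criterion for the sufficiency and extracting $\{n_k\}$ from the orbit of a single hypercyclic vector for the necessity. For sufficiency, suppose a sequence $\{n_k\}$ as in the statement exists. I would apply the Hypercyclicity Criterion with $\calL=\Lmuz$ (separable), $B=C_\phi$ (bounded by hypothesis), the dense set $X$ of finitely supported functions, the given $\{n_k\}$, and the maps $S_{n_k}\colon X\to\Lmuz$ built from injectivity: since $\phi^{n_k}$ restricts to a bijection of $T$ onto $T^{n_k}$ with inverse $\psi_k$, set $S_{n_k}f=(f\circ\psi_k)\,\bigchi_{T^{n_k}}$, which has finite support and so lies in $\Lmuz$. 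Then $C_\phi^{n_k}S_{n_k}f=f$ identically, so condition (3) is immediate; $\|S_{n_k}f\|_\mu=\max_{v\in\operatorname{supp}f}\mu(\phi^{n_k}(v))|f(v)|\to0$ is condition (2) and is exactly the first hypothesis; and $\|C_\phi^{n_k}f\|_\mu=\max\{\mu(\psi_k(w))|f(w)|:w\in\operatorname{supp}f\cap T^{n_k}\}\to0$ is condition (1), using the second hypothesis together with the observation that the $T^n$ are nested decreasing, so any $w\notin T^\infty$ eventually leaves $T^{n_k}$ and stops contributing. This yields hypercyclicity of $C_\phi$.

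For necessity, first recall that hypercyclicity of $C_\phi$ forces $\phi$ to have no periodic points (shown above), and $\phi$ is injective by hypothesis; consequently, for each $v\in T$ the iterates $\phi^{n}(v)$, and — whenever defined — the preimages $\phi^{-n}(v)$, are pairwise distinct, hence run off to infinity in $|\cdot|$ by local finiteness. Fix a hypercyclic vector $f$ and the nowhere-vanishing function $h\in\Lmuz$ given by $h(v)=2^{-|v|}/\mu(v)$, and use density of the orbit of $f$ (and of each of its tails) to choose an increasing sequence $\{n_k\}$ with $C_\phi^{n_k}f\to h$ in $\Lmuz$. For the first limit, fix $v$: since $f\in\Lmuz$ and $|\phi^{n_k}(v)|\to\infty$, we have $\mu(\phi^{n_k}(v))\,|f(\phi^{n_k}(v))|\to0$, while $f(\phi^{n_k}(v))=(C_\phi^{n_k}f)(v)\to h(v)\ne0$, and dividing gives $\mu(\phi^{n_k}(v))\to0$. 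It is precisely here that the ``little'' space is essential, which is consistent with no analogue holding on $\Lmu$.

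For the second limit, fix $v\in T^\infty$ and put $u_k=\phi^{-n_k}(v)$. Since the orbit of $f$ is dense it is not contained in $\ker e_v$, so $(C_\phi^{j}f)(v)=f(\phi^{j}(v))\ne0$ for some $j\ge0$; using the identities $\phi^{j}(u_k)=\phi^{-n_k}(\phi^{j}(v))$ and $(C_\phi^{n_k}f)(\phi^{j}(u_k))=f(\phi^{j}(v))$, the argument just used — applied to the points $\phi^{j}(u_k)$, which also escape to infinity — gives $\mu(\phi^{j}(u_k))\to0$, and then admissibility of $\phi$ (Proposition~\ref{Proposition:necessary}) yields a constant $C$ with $\mu(u_k)\le C^{\,j}\mu(\phi^{j}(u_k))\to0$. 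I expect the main obstacle to be exactly this last point: forcing both limits along one and the same sequence. The device that resolves it is to steer the orbit toward a nowhere-vanishing target $h$, which simultaneously pushes the forward weights down (via $h(v)\ne0$) and, after correcting by finitely many applications of $\phi$ and invoking admissibility, the backward weights down as well.
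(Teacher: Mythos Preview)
Your sufficiency argument is exactly the paper's: Hypercyclicity Criterion on the finitely supported functions with the natural right inverses $S_{n_k}$; you even make explicit the nesting $T^{n_k}\supseteq T^{n_{k+1}}$ with $\bigcap_k T^{n_k}=T^\infty$, which the paper leaves tacit.

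For necessity you take a genuinely different route. The paper first establishes a uniform claim---for every $\epsilon>0$, $N\in\N$ and finite $I\subseteq T$ there exists $n\ge N$ with $\mu(\phi^{n}(v))<\epsilon$ on $I$ and $\mu(\phi^{-n}(v))<\epsilon$ on $I\cap T^\infty$---by choosing a hypercyclic $f$ and an iterate $C_\phi^{n}f$ both within $\epsilon/2$ of $\sum_{v\in I}\chi_v$, invoking the run-away property to force $\phi^{n}(I)\cap I=\emptyset$, and combining the four resulting inequalities; the sequence $\{n_k\}$ is then built inductively from the exhausting balls $I_k=\{|v|\le k\}$. Your device of steering a single orbit toward the nowhere-vanishing target $h(v)=2^{-|v|}/\mu(v)\in\Lmuz$ and reading off both limits pointwise along the resulting $\{n_k\}$ is shorter and sidesteps the run-away argument entirely; the paper's route, in exchange, gives the estimates uniformly over any prescribed finite set at each stage. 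One wording point in your backward step: ``the argument just used'' is really its dual. At the moving points $w_k:=\phi^{j}(u_k)$ you have no control over $f(w_k)$; what you do have is $\mu(w_k)\bigl|(C_\phi^{n_k}f)(w_k)-h(w_k)\bigr|\le\|C_\phi^{n_k}f-h\|_\mu\to0$, together with $(C_\phi^{n_k}f)(w_k)=f(\phi^{j}(v))\ne0$ constant and $\mu(w_k)\,|h(w_k)|=2^{-|w_k|}\to0$ since $h\in\Lmuz$ and $|w_k|\to\infty$, which combine to give $\mu(w_k)\to0$; admissibility then finishes as you say. All the ingredients are present in your sketch, so this is phrasing rather than a gap.
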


For clarity, we prove each direction separately in what follows.

\begin{theorem}
Let $(T,\dist)$ be an unbounded, locally finite metric space with a
distinguished element $\o$, let $\mu$ be a positive function on $T$
and let $\phi$ be a self-map of $T$. Assume that $C_\phi$ is bounded
on $\Lmuz$ and $\phi$ is injective. If there exists an increasing
sequence of positive integers $\{ n_k \}$ such that 
$$
\mu(\phi^{n_k}(v)) \to 0  \qquad \text{ for every } v \in T,
$$
and
$$
\mu(\phi^{-n_k}(v)) \to 0 \qquad \text{ for every } v \in T^\infty,
$$
then $C_\phi$ is hypercyclic.
\end{theorem}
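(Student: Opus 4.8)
The plan is to invoke the Hypercyclicity Criterion with $\calL=\Lmuz$ (separable by the earlier results), $B=C_\phi$ (bounded by hypothesis), dense subset $X$ the space of finitely supported functions on $T$ (dense in $\Lmuz$ as shown earlier), and the given increasing sequence $\{n_k\}$. The pivotal structural remark is that since $\phi$ is injective, every iterate $\phi^{n}$ is injective and hence restricts to a bijection of $T$ onto $T^{n}$; this is what lets us invert $C_\phi^{n}$ on finitely supported data, and it is where the injectivity hypothesis gets used.

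Concretely, for $x\in X$ supported on a finite set $F$, I define $S_{n_k}x\in\Lmuz$ by $(S_{n_k}x)(\phi^{n_k}(v)):=x(v)$ for $v\in T$ and $(S_{n_k}x)(w):=0$ for $w\notin\phi^{n_k}(T)$. Injectivity of $\phi^{n_k}$ makes this unambiguous, and $S_{n_k}x$ is supported on the finite set $\phi^{n_k}(F)$, hence lies in $\Lmuz$. By construction $C_\phi^{n_k}(S_{n_k}x)=(S_{n_k}x)\circ\phi^{n_k}=x$, so condition (3) of the Criterion holds exactly. For condition (2), $\|S_{n_k}x\|_\mu=\max_{v\in F}\mu(\phi^{n_k}(v))\,|x(v)|$, and since $F$ is finite and $\mu(\phi^{n_k}(v))\to 0$ for each $v\in T$ by the first hypothesis, this tends to $0$.

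The delicate step is condition (1), namely $\|C_\phi^{n_k}x\|_\mu=\|x\circ\phi^{n_k}\|_\mu\to 0$. The function $x\circ\phi^{n_k}$ is supported on $\phi^{-n_k}(F)=\bigcup_{w\in F}\phi^{-n_k}(w)$, and each $\phi^{-n_k}(w)$ has at most one element by injectivity. I split $F$ into $F\cap T^\infty$ and $F\setminus T^\infty$. For $w\in F\setminus T^\infty$ there is $n$ with $w\notin T^{n}$; since $T^{1}\supseteq T^{2}\supseteq\cdots$ and $n_k\to\infty$, we get $\phi^{-n_k}(w)=\emptyset$ for all large $k$, so such points contribute nothing eventually. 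For $w\in F\cap T^\infty$, the set $\phi^{-n_k}(w)$ is a single vertex and $\mu(\phi^{-n_k}(w))\to 0$ by the second hypothesis. Taking the maximum over the finite set $F\cap T^\infty$ gives $\sup_{v\in\phi^{-n_k}(F)}\mu(v)\to 0$, whence $\|x\circ\phi^{n_k}\|_\mu\le\bigl(\max_{w\in F}|x(w)|\bigr)\sup_{v\in\phi^{-n_k}(F)}\mu(v)\to 0$. Having verified (1), (2), and (3), the Hypercyclicity Criterion gives that $C_\phi$ is hypercyclic. I expect the only real obstacle to be the bookkeeping around $F\setminus T^\infty$ together with the correct use of the nested structure of the images $T^{n}$; the rest is a direct computation.
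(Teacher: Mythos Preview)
Your proof is correct and follows essentially the same route as the paper: apply the Hypercyclicity Criterion with $X$ the finitely supported functions and with the natural right inverse $(S_n f)(v)=f(\phi^{-n}(v))$ on $\phi^n(T)$, $0$ elsewhere. Your verification of condition~(1) is in fact more explicit than the paper's---the paper writes the same maximum over $\{w_j\in T^{n_k}\}$ and simply asserts the limit is $0$, whereas you spell out the split $F=(F\cap T^\infty)\cup(F\setminus T^\infty)$ and use the nesting $T^1\supseteq T^2\supseteq\cdots$ to dispose of the second piece.
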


\begin{proof}
We use the Hypercyclicity Criterion. Let $X$ be the set of all finitely supported
functions on $T$. Let $f \in X$, $f$ not identically 0,
  and assume that
\[
\{ v \in T \, : \, f(v) \neq 0 \}=\{ w_1, w_2, \dots, w_s \}
\]
for some $s \in \N$.
\begin{enumerate}
\item For each $k\in\N$, we have
\begin{eqnarray*}
\| C_\phi^{n_k} f \|_\mu 
&=& \| f \circ \phi^{n_k} \|_\mu \\
&=& \sup_{v \in T} \mu(v) |f(\phi^{n_k}(v))| \\
&=& \sup_{w \in T^{n_k}} \mu(\phi^{-n_k}(w)) |f(w)| \\
&=& \max\{ \mu(\phi^{-n_k}(w_j)) |f(w_j)| \, : \,  w_j \in T^{n_k} \text{ and } j=1, 2, \dots, s \},
\end{eqnarray*}
where the maximum is understood to be 0 if 
 the set $\{ w_j \, : \, w_j \in T^{n_k}\}$ is empty. 
 Taking the limit as $
k\to \infty$ guarantees that $C_\phi^{n_k} f \to 0$, as desired.

\item Define the sequence of functions $S_n$ on $\Lmuz$ as
\[
(S_n f)(v) = \begin{cases}
f( \phi^{-n} (v)), & \text{ if } v \in \phi^n(T), \text{ and } \\
0, & \text{ if } v \notin \phi^n(T).
\end{cases}
\]
Clearly, if $f$ has finite support, then so does $S_n f$. Thus, for all $k\in\N$ 
\begin{eqnarray*}
\| S_{n_k} f \|_\mu 
&=& \sup_{v \in T} \mu(v) |(S_{n_k} f)(v))| \\
&=& \sup_{v \in \phi^{n_k}(T)} \mu(v) |f(\phi^{-n_k}(v))| \\
&=& \sup_{w \in T} \mu(\phi^{n_k}(w)) |f(w)| \\
&=& \max\{ \mu(\phi^{n_k}(w_j)) |f(w_j)| \, : \, j=1, 2, \dots, s \}.
\end{eqnarray*}
Taking the limit as $ k\to \infty$, we see 
that $S_{n_k} f \to 0$, as desired.

\item Lastly, observe that $(C_\phi^n S_n f)(v) = (S_n f) (\phi^n
  (v)) = f(v)$ for all $f \in X$ and all $v \in T$. Hence
  $C_\phi^{n_k} S_{n_k} f \to f$ as $k \to \infty$.
\end{enumerate}
By the Hypercyclicity Criterion, it follows that $C_\phi$ is hypercyclic.
\end{proof}

\begin{theorem}
Let $(T,\dist)$ be an unbounded, locally finite metric space with a
distinguished element $\o$, let $\mu$ be a positive function on $T$
and let $\phi$ be a self-map of $T$. Assume that $C_\phi$ is bounded
on $\Lmuz$ and $\phi$ is injective. If $C_\phi$ is hypercyclic, then
there exists a sequence $\{n_k\}$ of increasing positive integers such that
\[
\mu(\phi^{n_k}(v)) \to 0 \quad \text{ for all } v \in T,
\]
and
\[
\mu(\phi^{-n_k}(v)) \to 0 \quad \text{ for all } v \in T^\infty.
\]
\end{theorem}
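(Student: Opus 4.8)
The plan is to prove both conclusions at once from the density of a single orbit run against one carefully chosen target. Since $C_\phi$ is hypercyclic and $\Lmuz$ is a functional Banach space, the earlier theorem gives that $\phi$ has no periodic points; consequently, for every $w\in T$ the forward orbit $w,\phi(w),\phi^2(w),\dots$ consists of pairwise distinct points, and (using injectivity of $\phi$, so that $\phi^n$ is injective and $\phi^{-n}$ is a single-valued map on $T^n$) for every $u\in T^\infty$ the backward orbit $u,\phi^{-1}(u),\phi^{-2}(u),\dots$ also consists of pairwise distinct points. Because $T$ is locally finite, any strictly increasing sequence $\{n_k\}$ then forces $|\phi^{-n_k}(u)|\to\infty$ for each fixed $u\in T^\infty$.

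First I would fix the function $h\in\Lmuz$ given by $h(v)=\dfrac{1}{(|v|+1)\,\mu(v)}$; then $\mu(v)|h(v)|=\dfrac{1}{|v|+1}\to 0$, so $h\in\Lmuz$, and $h$ never vanishes. Next I would select a hypercyclic vector $f$ with the extra property that $f(u)\neq 0$ for every $u\in T^\infty$: the set of hypercyclic vectors is a dense $G_\delta$ in the complete space $\Lmuz$, and for each $u$ the set $\{g\in\Lmuz:g(u)\neq 0\}$ is open (by continuity of $e_u$, Lemma~\ref{Lemma:boundedevaluation}) and dense (its complement is a proper closed subspace), so since $T^\infty$ is countable the Baire category theorem produces such an $f$. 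Finally, using the standard fact that tails of a dense orbit remain dense (equivalently, $C_\phi^M f$ is again hypercyclic for every $M$), I would extract a strictly increasing sequence $\{n_k\}$ of positive integers with $\|C_\phi^{n_k}f-h\|_\mu\to 0$.

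With these choices both conclusions follow. For the forward condition, fix $w\in T$; evaluating at $w$ gives $f(\phi^{n_k}(w))=(C_\phi^{n_k}f)(w)\to h(w)\neq 0$, so $|f(\phi^{n_k}(w))|\geq |h(w)|/2$ for large $k$. Since $f\in\Lmuz$, the set $\{y\in T:\mu(y)|f(y)|\geq\eta\}$ is finite for every $\eta>0$ (it is contained in a bounded, hence finite, subset of $T$). If $\mu(\phi^{n_k}(w))\not\to 0$ there would be $c>0$ with $\mu(\phi^{n_k}(w))\geq c$ for infinitely many $k$, whence $\mu(\phi^{n_k}(w))|f(\phi^{n_k}(w))|\geq c|h(w)|/2$ for infinitely many $k$, placing the pairwise distinct points $\phi^{n_k}(w)$ in a fixed finite set infinitely often --- impossible. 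Hence $\mu(\phi^{n_k}(w))\to 0$. For the backward condition, fix $u\in T^\infty$ and put $v_k:=\phi^{-n_k}(u)$, so that $(C_\phi^{n_k}f)(v_k)=f(\phi^{n_k}(v_k))=f(u)$; thus $\mu(v_k)|f(u)-h(v_k)|\leq\|C_\phi^{n_k}f-h\|_\mu\to 0$, while $|v_k|\to\infty$ together with $h\in\Lmuz$ gives $\mu(v_k)|h(v_k)|\to 0$, so by the triangle inequality $\mu(v_k)|f(u)|\to 0$; since $f(u)\neq 0$ this says $\mu(\phi^{-n_k}(u))\to 0$.

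The delicate point --- and the reason for not taking an arbitrary hypercyclic vector --- is the backward condition at a vertex $u\in T^\infty$ with $f(u)=0$: there $(C_\phi^{n_k}f)(\phi^{-n_k}(u))=f(u)=0$ carries no information about $\mu(\phi^{-n_k}(u))$, and the final step collapses; arranging $f$ to be nonvanishing on $T^\infty$ by the Baire category theorem is precisely what rescues it. The remaining items --- existence of an increasing $\{n_k\}$ with $C_\phi^{n_k}f\to h$, finiteness of $\{\mu\cdot|f|\geq\eta\}$ from $f\in\Lmuz$ and local finiteness of $T$, and the verification that $h\in\Lmuz$ and is nowhere zero --- are routine.
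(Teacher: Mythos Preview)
Your proof is correct, and it takes a genuinely different route from the paper's argument.

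The paper proceeds by first proving a \emph{local} claim: for every $\epsilon>0$, every $N\in\N$ and every finite set $I\subseteq T$, there exists $n\ge N$ with $\mu(\phi^n(v))<\epsilon$ for all $v\in I$ and $\mu(\phi^{-n}(v))<\epsilon$ for all $v\in I\cap T^\infty$. This is done by approximating the target $\sum_{v\in I}\chi_v$ with the orbit of a hypercyclic vector and exploiting the run-away property to force $\phi^n(I)\cap I=\emptyset$. The sequence $\{n_k\}$ is then built inductively, applying the claim to an exhausting chain of finite sets $I_k=\{v:|v|\le k\}$ with $\epsilon=1/k$. Your approach instead selects one carefully prepared hypercyclic vector $f$ (nonvanishing on $T^\infty$, via Baire category and the dense-$G_\delta$ description of the hypercyclic vectors) and one global target $h\in\Lmuz$ that is nowhere zero, and extracts a single increasing sequence $\{n_k\}$ with $C_\phi^{n_k}f\to h$; both conclusions then follow by pointwise evaluation, using that $\{y:\mu(y)|f(y)|\ge\eta\}$ is finite and that backward orbits escape to infinity. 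Your argument is slicker and avoids the exhaustion-plus-induction scaffolding, at the cost of invoking the Baire category description of the hypercyclic vectors (which the paper does not need, since its claim works for an arbitrary hypercyclic $f$ with no nonvanishing requirement). The paper's approach, on the other hand, is slightly more elementary and makes the quantitative estimates on $\mu(\phi^{\pm n}(v))$ completely explicit on each finite set.
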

\begin{proof}
We first make the following claim.
\begin{claim}
For every $\epsilon >0$, every $N \in \N$ and every finite set $I
\subseteq T$, there exists a natural number $n \geq N$ such that
\[
\mu(\phi^n(v)) < \epsilon \quad \text{ for every } v \in I, 
\]
and
\[
\mu(\phi^{-n}(v)) < \epsilon \quad \text{ for every } v \in I \cap T^\infty. 
\] 
\end{claim}
To prove the claim, first assume, without loss of generality, that 
\[
0<\epsilon < \min\{\mu(v) \, : \, v \in I \}.
\]
Since $C_\phi$ is hypercyclic, there exists $f \in \Lmuz$ and $n > N$
such that
\begin{equation}\label{eq_f_sum}
\bigg\| f - \sum_{v \in I} \chi_v \bigg\|_\mu < \frac{\epsilon}{2}
\end{equation}
and
\begin{equation}\label{eq_Cf_sum}
\bigg\| C_\phi^n f - \sum_{v \in I} \chi_v \bigg\|_\mu < \frac{\epsilon}{2}.
\end{equation}
Furthermore, since $\phi$ must be a run-away function (because, since
$C_\phi$ is hypercyclic, the function $\phi$ has no periodic points)
we can assume that $\phi^n(I) \cap I = \emptyset$.
	
Inequality \eqref{eq_f_sum} implies that
\[
\mu(w) |f(w)| < \frac{\epsilon}{2}, \quad \text{ for all } w \notin I.
\]
Since $\phi^n(I) \cap I = \emptyset$, we must have that $\phi^n(v)
\notin I$ for every $v \in I$ and hence 
\begin{equation}\label{eq_mu_f_phi_n}
\mu(\phi^n(v)) |f(\phi^n(v))| < \frac{\epsilon}{2}, \quad \text{ for all } v \in I.
\end{equation}
		
On the other hand, by inequality \eqref{eq_Cf_sum}, we have 
\[
\mu(v) |f(\phi^n(v))-1 | < \frac{\epsilon}{2}, \quad \text{ for all } v \in I.
\]
Therefore 
\begin{equation}\label{eq_1_f_phi_n}
1- |f(\phi^n(v))| < \frac{\epsilon}{2 \mu(v)} \quad \text{ for all } v \in I.
\end{equation}
Recalling the assumption on $\epsilon$ and combining inequalities
\eqref{eq_1_f_phi_n} and \eqref{eq_mu_f_phi_n}, we obtain
\[
0 < 1-  \frac{\epsilon}{2 \mu(v)} < |f(\phi^n(v))| <  \frac{\epsilon}{2 \mu(\phi^n(v))}
\]
for every $ v \in I$, and hence
\[
\mu(\phi^n(v)) < \frac{\epsilon}{2- \frac{\epsilon}{\mu(v)}}.
\]
Since $2- \frac{\epsilon}{\mu(v)} > 1$, it follows that
\[
\mu(\phi^n(v)) < \epsilon, \quad \text{ for all } v \in I.
\]
		
Now, inequality \eqref{eq_Cf_sum} implies that 
\ben 
\mu(w) |f(\phi^n(w))| < \frac{\epsilon}{2}, \quad \text{ for all } w \notin I.
\label{newf}\eeqn
Let $v \in I \cap T^\infty$. Observe that $\phi^{-n}(v) \notin
  I$. Indeed, if $\phi^{-n}(v)$ were in $I$, then $v$ would be in
$\phi^n(I)$ and hence $v$ would be in $\phi^n(I) \cap I \cap
T^\infty$, which is impossible since $\phi^n(I) \cap I$ is empty.  

By (\ref{newf}) applied to $\phi^{-n}(v)$, we obtain
\begin{equation}\label{eq_mu_phi_n_w}
\mu(\phi^{-n}(v)) |f(v)| < \frac{\epsilon}{2}, \quad \text{ for all } v
\in  I \cap T^\infty.
\end{equation}
		
On the other hand, by inequality \eqref{eq_f_sum}, we have
\begin{equation}\label{eq_mu_f_1_2}
\mu(v) | f(v) - 1 | < \frac{\epsilon}{2}, \quad \text{ for all } v \in I.
\end{equation}
		
Proceeding as above, from inequalities \eqref{eq_mu_phi_n_w} and
\eqref{eq_mu_f_1_2}, it follows that, for every $v \in I \cap T^\infty$, 
\[
1- \frac{\epsilon}{2 \mu(v)} <  |f(v)| <  \frac{\epsilon}{2 \mu(\phi^{-n}(v))},
\]
Therefore, for each $v\in I \cap
T^\infty$, \[\mu(\phi^{-n}(v))<\frac{\epsilon}{2-\frac{\epsilon}{\mu(v)}}<\epsilon,\]
proving the claim.

Now, for each $k \in \N$, define $I_k:=\{ v \in T \, : \, |v| \leq k
\}$. Since $T$ is locally finite, each $I_k$ is a finite set. By the
claim, there exists $n_1 \in \N$ such that
\[
\mu(\phi^{n_1}(v)) < 1, \quad \text{ for all } v \in I_1,
\]
and
\[
\mu(\phi^{-n_1}(v)) < 1, \quad \text{ for all } v \in I_1\cap T^\infty.
\]

Arguing inductively, assume that for some $s\in\N$ we have
chosen integers $n_2,\dots,n_{s-1}$ with $n_1 < n_2 < n_3 < \dots <
n_{s-1}$ such that, for $k=1, 2, \dots, s-1$, we have
\[
\mu(\phi^{n_k}(v)) < \frac{1}{k}, \quad \text{ for all } v \in I_k,
\]
and
\[
\mu(\phi^{-n_k}(v)) < \frac{1}{k}, \quad \text{ for all } v \in I_k\cap T^\infty.
\]
Now, by the claim applied to $\epsilon=\frac{1}{s}$, $N=n_{s-1}$ and 
 $I=I_s$, it follows that there exists an integer $n_s> n_{s-1}$ with
\[
\mu(\phi^{n_s}(v)) < \frac{1}{s}, \quad \text{ for all } v \in I_s,
\]
and
\[
\mu(\phi^{-n_s}(v)) < \frac{1}{s}, \quad \text{ for all } v \in I_s\cap T^\infty.
\]
	
Lastly, we observe that the sequence $\{n _k \}$ satisfies the
conclusion of the theorem.
	
Indeed, let $v \in T$ and $\epsilon >0$. Choose $N \in \N$ such
that $N \geq \frac{1}{\epsilon}$ and $N \geq |v|$. Clearly, if $k \geq
N$, then $v \in I_k$ and hence
\[
\mu(\phi^{n_k}(v)) < \frac{1}{k} \leq \epsilon.
\]
Hence, $\ds \lim_{k \to \infty} \mu(\phi^{n_k}(v))=0$, as desired.
	
Analogously, let $v \in T^\infty$ and $\epsilon >0$. Then, choose $N \in
\N$ such that $N \geq \frac{1}{\epsilon}$ and $N \geq |v|$. Clearly,
if $k \geq N$, then $v \in I_k\cap T^\infty$ and hence
\[
\mu(\phi^{-n_k}(v)) < \frac{1}{k} \leq \epsilon.
\]
Hence, $\ds \lim_{k \to \infty} \mu(\phi^{-n_k}(v))=0$, as desired.
\end{proof}


We deduce the following result for the unweighted case. Denote by
$L^0(T)$ the space $L^0_\mu(T)$ when $\mu$ is the constant function 1.

\begin{corollary}
Let $(T,\dist)$ be an unbounded, locally finite metric space with a
distinguished element $\o$, let $\phi$ be a self-map of $T$. If $\phi$
is injective and $C_\phi$ is bounded on $L^0(T)$, then $C_\phi$ is not
hypercyclic.
\end{corollary}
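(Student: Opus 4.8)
The plan is to invoke Theorem~\ref{t5.8} directly, since its hypotheses are exactly those of the corollary: $\phi$ is injective and $C_\phi$ is bounded on $L^0_\mu(T)$ with the particular weight $\mu\equiv 1$. Under these hypotheses, Theorem~\ref{t5.8} asserts that $C_\phi$ is hypercyclic if and only if there is an increasing sequence $\{n_k\}$ of positive integers with $\mu(\phi^{n_k}(v))\to 0$ for every $v\in T$ and $\mu(\phi^{-n_k}(v))\to 0$ for every $v\in T^\infty$. So it suffices to show the first of these two conditions can never hold.

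First I would observe that for every $v\in T$ and every $n\in\N$, the iterate $\phi^n(v)$ is again an element of $T$, and since $\mu$ is the constant function $1$ we have $\mu(\phi^{n}(v))=1$. Hence for any increasing sequence $\{n_k\}$ and any $v\in T$, the sequence $\mu(\phi^{n_k}(v))$ is identically $1$ and does not converge to $0$. Therefore no sequence $\{n_k\}$ satisfying the condition of Theorem~\ref{t5.8} exists, and consequently $C_\phi$ is not hypercyclic on $L^0(T)$.

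There is essentially no obstacle here: the argument is a one-line specialization of Theorem~\ref{t5.8}, and one does not even need to examine the second condition involving $T^\infty$, as the failure of $\mu(\phi^{n_k}(v))\to 0$ already rules out hypercyclicity. The only point worth stating explicitly is that the corollary is vacuous-free, i.e.\ there do exist injective self-maps $\phi$ for which $C_\phi$ is bounded on $L^0(T)$ (for instance, any admissible automorphism of a locally finite graph with the edge-counting metric, by Proposition~\ref{pro_phi_minus_one_finite}), so that the statement has content.
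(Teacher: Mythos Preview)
Your proof is correct and is exactly the argument the paper intends: the corollary is stated without proof because it follows immediately from Theorem~\ref{t5.8} upon noting that with $\mu\equiv 1$ the condition $\mu(\phi^{n_k}(v))\to 0$ can never hold. Your observation that the second condition on $T^\infty$ need not even be checked is also correct.
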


The following example shows that hypercyclicity can, indeed, occur. 

\begin{example}
	Let $T$ be an infinite, locally finite tree with root
        $\o$. Let $\phi$ be a bijection of $T$ with no periodic points
        (for example, if $T$ is a homogeneous tree, automorphisms with
        no periodic points clearly exist). Define a positive function
        $\mu$ on $T$ in such a way that $\mu(v) \to 0$ as $|v| \to
        \infty$ (for example, $\mu(v)=\frac{1}{|v|+1}$). Then $C_\phi$
        is hypercyclic on $\Lmuz$.
\end{example}
\begin{proof}
We claim that if $\phi$ is a bijection with no periodic points, then
$\phi^n(v) \to \infty$ as $n \to \infty$. Suppose not. Then, there
must exist a vertex $v$, a constant $M$ and an increasing sequence of
natural numbers $\{ n_k \}$ such that
$$
|\phi^{n_k}(v)| \leq M, \quad \text{ as } k \to \infty.
$$
Since the set $\{ v \in T \, : \, |v|\leq M\}$ is finite, this implies
that there must exist integers $s$ and $t$ with $n_s < n_t$ such that
$\phi^{n_s}(v)=\phi^{n_t}(v)$. But this implies that
$$
\phi^{n_t-n_s}( \phi^{n_s} (v) ) = \phi^{n_s}(v),
$$
and hence that $\phi^{n_s}(v)$ is a periodic point, which is a contradiction.
	
Now, it follows that, for the full sequence $\{ n \}$ we have that
$$
\mu(\phi^{n}(v)) \to 0, \quad \text{ as } n \to \infty,
$$
for every $v \in T$, since $\phi^{n}(v) \to \infty$.
	
Since $\phi^{-1}$ is also a bijection of $T$ without periodic points,
it follows that
$$
\mu(\phi^{-n}(v)) \to 0, \quad \text{ as } n \to \infty,
$$
for every $v \in T$. Then Theorem~\ref{t5.8} guarantees that $C_\phi$
is  hypercyclic.
\end{proof}

\section*{Acknowledgements}
The research of the third author is partially supported by the Asociaci\'on Mexicana de Cultura A.C

\bibliographystyle{amsplain}
\bibliography{references.bib}

\providecommand{\bysame}{\leavevmode\hbox to3em{\hrulefill}\thinspace}
\providecommand{\MR}{\relax\ifhmode\unskip\space\fi MR }
\providecommand{\MRhref}[2]{%
  \href{http://www.ams.org/mathscinet-getitem?mr=#1}{#2}
}
\providecommand{\href}[2]{#2}
\begin{thebibliography}{10}

\bibitem{AllenColonnaEasley:2014}
Robert~F. Allen, Flavia Colonna, and Glenn~R. Easley, \emph{Composition
  operators on the {L}ipschitz space of a tree}, Mediterr. J. Math. \textbf{11}
  (2014), no.~1, 97--108. \MR{3160615}

\bibitem{AllenCraig:2015}
Robert~F. Allen and Isaac~M. Craig, \emph{Multiplication operators on weighted
  {B}anach spaces of a tree}, Bull. Korean Math. Soc. \textbf{54} (2017),
  no.~3, 747--761. \MR{3659146}

\bibitem{AllenPons:2018}
Robert~F. Allen and Matthew~A. Pons.

\bibitem{AllenPons:2016}
\bysame, \emph{Composition operators on weighted {B}anach spaces of a tree},
  Bull. Malays. Math. Sci. Soc. \textbf{41} (2018), no.~4, 1805--1818.
  \MR{3854493}

\bibitem{BaMa}
Fr\'{e}d\'{e}ric Bayart and \'{E}tienne Matheron, \emph{Dynamics of linear
  operators}, Cambridge Tracts in Mathematics, vol. 179, Cambridge University
  Press, Cambridge, 2009. \MR{2533318}

\bibitem{BBG-1}
Klaus~D. Bierstedt, Jos\'{e} Bonet, and Antonio Galbis, \emph{Weighted spaces
  of holomorphic functions on balanced domains}, Michigan Math. J. \textbf{40}
  (1993), no.~2, 271--297. \MR{1226832}

\bibitem{BBG-2}
Klaus~D. Bierstedt, Jos\'{e} Bonet, and Jari Taskinen, \emph{Associated weights
  and spaces of holomorphic functions}, Studia Math. \textbf{127} (1998),
  no.~2, 137--168. \MR{1488148}

\bibitem{BBG-3}
Klaus-D. Bierstedt, Reinhold Meise, and William~H. Summers, \emph{A projective
  description of weighted inductive limits}, Trans. Amer. Math. Soc.
  \textbf{272} (1982), no.~1, 107--160. \MR{656483}

\bibitem{ColonnaEasley:2010}
Flavia Colonna and Glenn~R. Easley, \emph{Multiplication operators on the
  {L}ipschitz space of a tree}, Integral Equations Operator Theory \textbf{68}
  (2010), no.~3, 391--411. \MR{2735443}

\bibitem{CoMA}
Flavia Colonna and Rub\'{e}n~A. Mart\'{\i}nez-Avenda\~{n}o,
  \emph{Hypercyclicity of composition operators on {B}anach spaces of analytic
  functions}, Complex Anal. Oper. Theory \textbf{12} (2018), no.~1, 305--323.
  \MR{3741683}

\bibitem{CowMac}
Carl~C. Cowen and Barbara~D. MacCluer, \emph{Composition operators on spaces of
  analytic functions}, Studies in Advanced Mathematics, CRC Press, Boca Raton,
  FL, 1995. \MR{1397026}

\bibitem{GrEr}
K.-G. Grosse-Erdmann, \emph{Hypercyclic and chaotic weighted shifts}, Studia
  Math. \textbf{139} (2000), no.~1, 47--68. \MR{1763044}

\bibitem{GrErPeris}
Karl-G. Grosse-Erdmann and Alfredo Peris~Manguillot, \emph{Linear chaos},
  Universitext, Springer, London, 2011. \MR{2919812}

\bibitem{LiangZhou}
Yu-Xia Liang and Ze-Hua Zhou, \emph{Hypercyclic behaviour of multiples of
  composition operators on weighted {B}anach spaces of holomorphic functions},
  Bull. Belg. Math. Soc. Simon Stevin \textbf{21} (2014), no.~3, 385--401.
  \MR{3250768}

\bibitem{Lus}
Wolfgang Lusky, \emph{On the structure of {$Hv_0(D)$} and {$hv_0(D)$}}, Math.
  Nachr. \textbf{159} (1992), 279--289. \MR{1237115}

\bibitem{MA}
Rub\'{e}n~A. Mart\'{\i}nez-Avenda\~{n}o, \emph{Hypercyclicity of shifts on
  weighted {$L^p$} spaces of directed trees}, J. Math. Anal. Appl. \textbf{446}
  (2017), no.~1, 823--842. \MR{3554758}

\bibitem{meg}
Robert~E. Megginson, \emph{An introduction to {B}anach space theory}, Graduate
  Texts in Mathematics, vol. 183, Springer-Verlag, New York, 1998. \MR{1650235}

\bibitem{MirWolf}
Alejandro Miralles and Elke Wolf, \emph{Hypercyclic composition operators on
  {$H^0_v$}-spaces}, Math. Nachr. \textbf{286} (2013), no.~1, 34--41.
  \MR{3019502}

\bibitem{MuthukumarPonnusamy-I:2016}
Perumal Muthukumar and Saminathan Ponnusamy, \emph{Composition operators on the
  discrete analogue of generalized {H}ardy space on homogenous trees}, Bull.
  Malays. Math. Sci. Soc. \textbf{40} (2017), no.~4, 1801--1815. \MR{3712587}

\bibitem{MuthukumarPonnusamy:2016}
\bysame, \emph{Discrete analogue of generalized {H}ardy spaces and
  multiplication operators on homogenous trees}, Anal. Math. Phys. \textbf{7}
  (2017), no.~3, 267--283. \MR{3683009}

\bibitem{Nord}
Eric~A. Nordgren, \emph{Composition operators}, Canadian J. Math. \textbf{20}
  (1968), 442--449. \MR{223914}

\end{thebibliography}

\end{document}